

\documentclass{amsart}
\usepackage{amsmath}
\usepackage{amsfonts}
\usepackage{graphics}
\usepackage{epsfig}
\usepackage{amssymb}
\usepackage{amscd}
\usepackage[all]{xy}
\usepackage{latexsym}
\usepackage{graphicx}
\usepackage{multirow}
\usepackage{geometry}

\theoremstyle{remark}
\newtheorem{lem}{\bf Lemma}[section]

\newtheorem{thm}{\bf Theorem}[section]
\newtheorem{cor}{\bf Corollary}[section]

\newtheorem{prop}{\bf Proposition}[section]

\numberwithin{equation}{section} \numberwithin{figure}{section}

\renewcommand*{\to}{\rightarrow}
\renewcommand*{\bar}[1]{\overline{#1}}

\newcommand{\ch}{\operatorname{ch}}

\newcommand{\Gr}{\operatorname{Gr}}





\newcommand{\mb}[1]{\mathbb{#1}} 

\newcommand{\hs}{\mathcal{H}}
\newcommand{\mc}[1]{\mathcal{#1}}

\textwidth = 6in \oddsidemargin = 0.25in \evensidemargin = 0.25in
\textheight = 8.7in \topmargin = -0.2in

\begin{document}
\large \setcounter{section}{0}

\title{Equivariant cohomology of infinite-dimensional Grassmannian and shifted Schur functions}

\author{Jia-Ming (Frank) Liou, Albert Schwarz}

\begin{abstract}\large
We  study the multiplication and comultiplication in equivariant cohomology of Sato Grassmannian  .
\end{abstract}

\maketitle

\allowdisplaybreaks



\allowdisplaybreaks

\section {Introduction}
Let us consider the Hilbert space $\hs=L^{2}(S^{1})$ and its
subspaces $\hs _+$, $\hs_{-}$ defined as closed subspaces of $\hs$
spanned by $\{z^{i}:i\geq 0\}$ and $\{z^{j}:j<0\}$ respectively .
Using the decomposition $\hs=\hs _+\oplus \hs_-$ one can define the
(Segal-Wilson version of) Sato Grassmannian $\Gr (\hs)$ as a space
of all closed linear subspaces $W\subset \hs$ such that the
projection $\pi _-:W\to \hs _-$ is a Fredholm operator and the
projection $\pi_+:W\to \hs _+$ is a compact operator (see \cite{PS}
for more detail).

The group $S^1$ acts naturally on $\hs$: to every $\alpha $ obeying
$|\alpha|=1$ we assign a map $f(z)\to f(\alpha z)$. This action (we
will call it standard action) generates an action of $S^1$ on
Grassmannian . Representing a function on a circle as a Fourier
series $f(z)=\sum a_nz^n$, we see that the standard action sends
$a_k\to \alpha ^ka_k.$  The standard action is related to the
$S^1$-action on various moduli spaces embedded into Grassmannian by
means of Krichever construction, see \cite {Sch} and \cite{LS} for
more detail. Motivated by the desire to construct
nonperturbative string theory we considered in \cite {LS} equvariant cohomology of
Sato Grassmannian and homomorphism of this cohomology induced by the
Krichever map. The results of present paper will be used in \cite
{LSw} to study this homomorphism in more detail.

One can consider  more general actions of $S^1$ on $\hs$ sending
$a_k\to \alpha ^{n_k}a_k$ where ${n_k}\in \mathbb{Z}$ is an
arbitrary doubly infinite sequence of integers. This action also
generates an action of $S^1$ on Grassmannian. The Grassmannian $\Gr
(\hs)$ is a disjoint union of connected components $\Gr _d(\hs)$
labeled by the index of the projection $\pi _-:W\to \hs _-$ . All
components are homeomorphic. It was proven in \cite {PS} that every
component is homotopy equivalent to a subspace having a cell
decomposition $K=\cup \sigma _{\lambda}$ consisting of
even-dimensional cells (finite-dimensional Schubert cells). The
cells are labeled by partitions. This decomposition is
$S^1$-invariant with respect to any action of $S^1$ from the class
of actions we are interested in. This allows us to say that the
equivariant cohomology $H_{S^1}(\Gr_d(\hs))$ has a free system of
generators $\Omega_{\lambda}^{T}=[\bar{\Sigma} _{\lambda}]$ as a
module over $H_{S^1}(\mbox{pt}).$ These generators can be
interpreted also as cohomology classes dual to Schubert cycles
$\bar{\Sigma} _{\lambda}$ having finite codimension, In this letter
we calculate the multiplication table in the basis
$\Omega_{\lambda}^{T}$.

\begin{thm}\label{coeff0}
For the standard $S^1$-action on $\Gr_{d}(\hs)$ the coefficients in
the decomposition
\begin{equation}\label{coef0}
\Omega_{\lambda}^{T}\Omega_{\mu}^{T}=\sum_{\nu}C_{\lambda
\mu}^{\nu}(u)\Omega_{\nu}^{T}
\end{equation}
can be expressed in terms of coefficients in the decomposition
\begin{equation}\label{coef1}
s^* _{\lambda}s^*_{\mu}=\sum_{\nu}C_{\lambda \mu}^{\nu}s^* _{\nu}
\end{equation}
by the formula $C_{\lambda \mu}^{\nu}(u)=C_{\lambda
\mu}^{\nu}u^{|\lambda|+|\mu|-|\nu|}$. Here $s^*_{\lambda}$ stands
for the shifted Schur function in the sense  of \cite {OO} and
$|\lambda|=\sum_{i}\lambda_{i}$ is the weight of the partition
$\lambda=(\lambda_{i})$.
\end{thm}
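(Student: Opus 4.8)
The plan is to reduce the product in equivariant cohomology to a componentwise product over the fixed points of the $S^1$-action, and then to identify the fixed-point restrictions of the classes $\Omega_\lambda^T$ with the values of shifted Schur functions. For the standard action the fixed points of $\Gr_d(\hs)$ are the coordinate subspaces spanned by $\{z^k : k\in S\}$ for admissible subsets $S\subseteq\mb{Z}$ of index $d$, and these are naturally labelled by partitions $\mu$; write $p_\mu$ for the corresponding fixed point. Restriction to fixed points gives a ring homomorphism
\begin{equation}
r:H_{S^1}(\Gr_d(\hs))\longrightarrow \prod_{\mu}H_{S^1}(p_\mu)=\prod_\mu \mb{Z}[u],
\end{equation}
where multiplication on the right is componentwise. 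Because the Schubert stratification is by finite-dimensional cells and each graded piece of $H_{S^1}(\Gr_d(\hs))$ is finitely generated over $\mb{Z}[u]$, I would establish injectivity of $r$ degree by degree, e.g.\ by exhausting $\Gr_d(\hs)$ by the finite-dimensional Schubert varieties $\bar{\Sigma}_\lambda$ and invoking the localization theorem on each; this makes the structure constants in \eqref{coef0} computable from fixed-point data alone.

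The heart of the argument is the identification
\begin{equation}
\Omega_\lambda^T\big|_{p_\mu}=s^*_\lambda(\mu)\,u^{|\lambda|}.
\end{equation}
Since $\Omega_\lambda^T\big|_{p_\mu}$ is homogeneous of degree $2|\lambda|$ in $\mb{Z}[u]$ it equals $c_{\lambda\mu}u^{|\lambda|}$ for a constant $c_{\lambda\mu}$, and $c_{\lambda\mu}=0$ unless $p_\mu\in\bar{\Sigma}_\lambda$, i.e.\ unless $\lambda\subseteq\mu$; in particular $c_{\lambda\mu}=0$ whenever $|\mu|\le|\lambda|$ and $\mu\neq\lambda$. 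The self-restriction $c_{\lambda\lambda}u^{|\lambda|}$ is the equivariant Euler class of the normal bundle to $\bar{\Sigma}_\lambda$ at $p_\lambda$, which for the standard action is the product of the tangent weights; computing these weights from the action $a_k\mapsto\alpha^k a_k$ identifies their multiset with the hook lengths of $\lambda$, so that $c_{\lambda\lambda}=\prod_{(i,j)\in\lambda}h(i,j)=H(\lambda)$. Together with the fact that $\mu\mapsto c_{\lambda\mu}$ is a shifted symmetric function of degree $\le|\lambda|$ (inherited from the double, or factorial, Schur polynomials attached to finite-dimensional approximations in their stable limit), these vanishing and normalization properties are exactly the Okounkov--Olshanski characterization of $s^*_\lambda$ in \cite{OO}, forcing $c_{\lambda\mu}=s^*_\lambda(\mu)$. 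This step is the main obstacle: both the tangent-weight computation at the fixed points and the verification of shifted symmetry of the restriction functions require care, and everything else is formal once it is in place.

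Granting the identification, I would finish by evaluating the two multiplication rules against each other at every fixed point. Applying $r_\nu$ to \eqref{coef0} and using componentwise multiplication gives
\begin{equation}
s^*_\lambda(\nu)\,s^*_\mu(\nu)\,u^{|\lambda|+|\mu|}=\sum_{\rho}C_{\lambda\mu}^{\rho}(u)\,s^*_\rho(\nu)\,u^{|\rho|},
\end{equation}
while evaluating \eqref{coef1} at $\nu$ yields $s^*_\lambda(\nu)\,s^*_\mu(\nu)=\sum_\rho C_{\lambda\mu}^{\rho}s^*_\rho(\nu)$. Substituting the latter into the former and using that the shifted Schur functions $\{s^*_\rho\}$ are linearly independent as functions on partitions (hence over $\mb{Z}[u]$ coefficientwise), I would match coefficients of each $s^*_\rho(\nu)$ to obtain $C_{\lambda\mu}^{\rho}(u)\,u^{|\rho|}=C_{\lambda\mu}^{\rho}\,u^{|\lambda|+|\mu|}$, hence $C_{\lambda\mu}^{\rho}(u)=C_{\lambda\mu}^{\rho}\,u^{|\lambda|+|\mu|-|\rho|}$. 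The exponent is nonnegative because $\deg(s^*_\lambda s^*_\mu)\le|\lambda|+|\mu|$ in the filtered ring of shifted symmetric functions, so only $\rho$ with $|\rho|\le|\lambda|+|\mu|$ occur; this is precisely the asserted formula.
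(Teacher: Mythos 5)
Your overall skeleton --- restrict to the $S^1$-fixed points $p_\mu$, identify $\Omega_\lambda^T\big|_{p_\mu}$ with $u^{|\lambda|}s^*_\lambda(\mu)$, and then match coefficients using the linear independence of the functions $\nu\mapsto s^*_\rho(\nu)$ on partitions --- is the same as the paper's (Section 3: Lemma \ref{fix} followed by the coefficient comparison; note that injectivity of your localization map $r$ is never actually needed, only the linear independence you invoke at the end, so that part of your setup is dispensable). The divergence, and the problem, is in how the key identification is established. The paper proves Lemma \ref{fix} by explicit computation: the Kempf--Laksov formula (\ref{schubert}) expresses $\Omega^T_{\lambda,l}$ as a determinant in equivariant Chern classes of $\underline{\hs}_{-l,\lambda_i-i+d-1}-\mc E_l$, the generalized Jacobi--Trudi identity (\ref{GJT}) converts that determinant into a double Schur function in the Chern roots of $\mc E_l^{\vee}$, and restriction to the fixed point evaluates it to $u^{|\lambda|}s^*_\lambda(\delta_1,\dots,\delta_{d+l})$. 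You instead propose to invoke the Okounkov--Olshanski characterization theorem, which needs three inputs: vanishing of $c_{\lambda\mu}$ unless $\lambda\subseteq\mu$ (fine: support of the dual class of an invariant cycle, plus the closure order on Schubert cells); the normalization $c_{\lambda\lambda}=H(\lambda)$ (essentially fine, though the normal weights at $p_\lambda$ are $(m-s_n)u$ with $m<s_n$, $m\notin S$, so each weight is a \emph{negative} multiple of $u$ and only the multiset of absolute values is the hook multiset --- with the paper's convention $c^{T}(\underline{\hs}_{-l,m})=\prod_{j=-l}^{m}(1-ju)$ the signs work out, but you must fix and check a convention); and, crucially, that $\mu\mapsto c_{\lambda\mu}$ is a shifted symmetric function of degree at most $|\lambda|$.

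That third input is a genuine gap. Your justification --- that shifted symmetry is ``inherited from the double, or factorial, Schur polynomials attached to finite-dimensional approximations in their stable limit'' --- is circular: knowing that the fixed-point restrictions are values of factorial/double Schur polynomials is precisely the identification you are trying to prove (and is exactly what the paper's Kempf--Laksov computation delivers); once you have it, the characterization theorem is superfluous, and without it you have no argument at all that $c_{\lambda\mu}$ is even polynomial in the $\mu_i$, let alone symmetric in the shifted variables $\mu_i-i$ with the right degree bound. A priori $c_{\lambda\mu}$ is just a number attached to each partition $\mu$. To close the gap independently of the paper's route you would need real work, e.g.\ passing to the full torus $\mb T$, establishing GKM-type relations or Weyl-invariance of fixed-point restrictions on the approximations $\Gr_d^l$, and then specializing $u_k=ku$; this is comparable in effort to the paper's direct computation. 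Your final paragraph --- evaluating \eqref{coef0} at every fixed point, comparing with \eqref{coef1}, and matching coefficients to get $C_{\lambda\mu}^{\nu}(u)=C_{\lambda\mu}^{\nu}u^{|\lambda|+|\mu|-|\nu|}$ --- is correct as it stands, and agrees with the paper's concluding argument.
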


Combining this theorem with  the results of \cite {Mol} we obtain
the following expression for the coefficients.
\begin{cor}
\begin{equation*}
C_{\lambda\mu}^{\nu}(u)=u^{|\lambda|+|\mu|-|\nu|}\sum_{\lambda,\mu\subset
\rho,\nu}(-1)^{|\nu|-|\rho|}\frac{h(\rho)}{h(\nu/\rho)h(\rho/\lambda)h(\rho/\mu)}.
\end{equation*}
Here the function $h$ is defined to be
$h(\nu/\mu)=|\nu/\mu|!/\dim\left(\nu/\mu\right)$ for any skew
diagram $\nu/\mu$ and $\dim(\nu/\mu)$ is the number of standard
$\nu/\mu$-tableaux.
\end{cor}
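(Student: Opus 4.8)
The plan is to treat the Corollary as the combination of two independent inputs: the reduction supplied by Theorem \ref{coeff0}, which strips off all the topology, and the explicit evaluation of the shifted-Schur structure constants of \cite{Mol}, which supplies the combinatorial sum. By Theorem \ref{coeff0} we already know $C_{\lambda\mu}^{\nu}(u)=C_{\lambda\mu}^{\nu}\,u^{|\lambda|+|\mu|-|\nu|}$, where $C_{\lambda\mu}^{\nu}$ is the constant appearing in \eqref{coef1}. Hence the only thing left to prove is the purely combinatorial identity
$$C_{\lambda\mu}^{\nu}=\sum_{\lambda,\mu\subseteq\rho\subseteq\nu}(-1)^{|\nu|-|\rho|}\frac{h(\rho)}{h(\nu/\rho)\,h(\rho/\lambda)\,h(\rho/\mu)},$$
after which multiplication by $u^{|\lambda|+|\mu|-|\nu|}$ reproduces the stated formula verbatim.

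To obtain this identity I would exploit the characterization of the $s^*_{\nu}$ by their values on partitions. Recall from \cite{OO} the evaluation $s^*_{\mu}(\rho)=h(\rho)/h(\rho/\mu)$, understood to vanish unless $\mu\subseteq\rho$, where $h(\rho)=|\rho|!/\dim\rho$ and $h(\rho/\mu)=|\rho/\mu|!/\dim(\rho/\mu)$ in the notation of the statement. Evaluating the product relation \eqref{coef1} at an arbitrary partition $\rho$ and dividing through by $h(\rho)$ turns \eqref{coef1} into the triangular linear system
$$\frac{h(\rho)}{h(\rho/\lambda)\,h(\rho/\mu)}=\sum_{\nu\subseteq\rho}C_{\lambda\mu}^{\nu}\,\frac{1}{h(\rho/\nu)},$$
valid for every $\rho$. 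Since $h(\emptyset)=1$, the kernel $K(\nu,\rho)=1/h(\rho/\nu)$ is unitriangular on Young's lattice, so this system determines the $C_{\lambda\mu}^{\nu}$ uniquely.

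The heart of the argument is then to invert $K$ inside the incidence algebra of Young's lattice. I claim its inverse is $K^{-1}(\rho,\nu)=(-1)^{|\nu|-|\rho|}/h(\nu/\rho)$; granting this, applying $K^{-1}$ to the displayed system yields exactly the signed sum above, with the range $\lambda,\mu\subseteq\rho$ forced by the left-hand factor and $\rho\subseteq\nu$ forced by $K^{-1}$. The inversion claim is equivalent to the cancellation identity
$$\sum_{\sigma\subseteq\nu\subseteq\rho}(-1)^{|\rho|-|\nu|}\frac{1}{h(\nu/\sigma)\,h(\rho/\nu)}=\delta_{\sigma\rho},$$
a vanishing statement for the alternating sum over intermediate skew shapes; one checks it directly in small cases (a single box, then $(2)$ and $(2,1)$) and proves it in general by passing to the generating function in which $1/h(\nu/\sigma)=\dim(\nu/\sigma)/|\nu/\sigma|!$ are the branching coefficients, so that the alternating sum becomes an antipode/telescoping computation in the corresponding graded structure.

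I expect the main obstacle to be precisely this inversion identity: matching the alternating hook-length sum to the inverse kernel is where the genuine combinatorial content lies, whereas the two steps flanking it are bookkeeping. This is exactly the computation carried out in \cite{Mol}, so in the write-up I would cite Molev's evaluation of the structure constants for the inversion step and retain the self-contained derivation above as motivation; the final passage from $C_{\lambda\mu}^{\nu}$ to $C_{\lambda\mu}^{\nu}(u)$ is then immediate from Theorem \ref{coeff0}.
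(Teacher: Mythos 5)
Your proposal is correct, and its skeleton coincides with the paper's: the paper's entire proof of this Corollary is the single sentence preceding it, which combines Theorem \ref{coeff0} with the structure-constant formula of \cite{Mol} --- precisely the two inputs you isolate, and in your final write-up plan you cite \cite{Mol} for the combinatorial step just as the paper does. Where you go beyond the paper is in sketching an actual derivation of the identity the paper outsources, and that sketch is sound: the evaluation $s^*_\mu(\rho)=h(\rho)/h(\rho/\mu)$, vanishing unless $\mu\subseteq\rho$, is the Okounkov--Olshanski vanishing/characterization theorem from \cite{OO}; the resulting system is indeed unitriangular in the kernel $K(\nu,\rho)=1/h(\rho/\nu)$ (so it pins down the $C_{\lambda\mu}^{\nu}$ without any appeal to injectivity of evaluation); and your claimed inverse kernel $(-1)^{|\nu|-|\rho|}/h(\nu/\rho)$ is correct. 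The one place you wave your hands --- the cancellation identity, attributed to an ``antipode/telescoping'' computation --- closes elementarily: since $1/h(\nu/\sigma)=\dim(\nu/\sigma)/|\nu/\sigma|!$ and every saturated chain from $\sigma$ to $\rho$ in Young's lattice passes through a unique shape of each intermediate rank, one has $\sum_{|\nu|=m}\dim(\nu/\sigma)\dim(\rho/\nu)=\dim(\rho/\sigma)$ for each $m$, so the alternating sum collapses to $\frac{\dim(\rho/\sigma)}{N!}\sum_{k=0}^{N}(-1)^{N-k}\binom{N}{k}$ with $N=|\rho|-|\sigma|$, which is $\delta_{\sigma\rho}$. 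With that substitution your argument is complete and strictly more self-contained than the paper's purely citational proof; the trade-off is that the paper's route is immediate, while yours in effect reproves the relevant special case of the Molev--Sagan formula but makes the Corollary verifiable without consulting \cite{Mol}.
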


The Theorem \ref {coeff0} can be applied to the analysis of
multiplication in tautological cohomology ring of the  moduli spaces
of complex curves (more precisely, to the study of intersection of
cycles defined in terms of Weierstrass points), see \cite {LSw}. To
generalize the theorem to the non-standard action of $S^1$ we
introduce the notion of shifted double Schur function (it is closely
related to double Schur functions  of infinite number of arguments
introduced in \cite {Mol2}).

Let $x=(x_{1},\cdots,x_{n})$ be an $n$-tuple of variables and
$y=(y_{i})_{i\in\mb Z}$ be a doubly infinite sequence. Recall that,
the double Schur function $^n s_{\lambda}(x_{1},\cdots,x_{n}|y)$ is
{\footnote { We add the index $n$ to the conventional notation
$s_{\lambda}$ to emphasize that the function depends on $n$
variables $x_k$.}} a symmetric polynomial in
$x=(x_{1},\cdots,x_{n})$ with coefficients in $\mb C[y]$ defined by
\begin{equation*}
^{n}s_{\lambda}(x_{1},\cdots,x_{n}|y)=\det\left[(x_{i}|y)^{\lambda_{j}+n-j}\right]/\det\left[(x_{i}|y)^{n-j}\right],
\end{equation*}
where $(x_{i}|y)^{p}=\prod_{j=1}^{p}(x_{i}-y_{j})$. Let us introduce
the shift operator on $\mb C[y]$ given by
\begin{equation*}
(\tau y)_{i}=y_{i-1},\quad i\in\mb Z.
\end{equation*}
Then the double Schur function satisfies the generalized
Jacobi-Trudi formula \cite{Ful}:
\begin{equation}\label{GJT}
^{n}s_{\lambda}(x_{1},\cdots,x_{n}|y)=\det\left[h_{\lambda_{i}+j-i}(x_{1},\cdots,x_{n}|\tau^{j-1}y)\right]_{i,j=1}^{n},
\end{equation}
where
\begin{equation*}
h_{p}(x_{1},\cdots,x_{n}|y)=\sum_{1\leq i_{1}\leq \cdots\leq
i_{p}\leq k}(x_{i_{1}}-y_{i_{1}})\cdots
(x_{i_{p}}-y_{i_{p}+p-1}),\quad p\geq 1.
\end{equation*}
The Littlewood-Richardson coefficients $^{n}c_{\lambda\mu}^{\nu}(y)$
of the double Schur functions are defined by
\begin{equation}\label{coeffl}
^{n}s_{\lambda}(x_{1},\cdots,x_{n}|y)\
^{n}s_{\mu}(x_{1},\cdots,x_{n}|y)=\sum_{\nu}\
^{n}c_{\lambda\mu}^{\nu}(y)\ ^{n}s_{\mu}(x_{1},\cdots,x_{n}|y).
\end{equation}
These coefficients $^{n}c_{\lambda\mu}^{\nu}(y)$ were calculated in
\cite{Mol}. We define the shifted double Schur function by
\begin{equation}\label{sds1}
^ns_{\lambda}^{*}(x_1,\cdots,x_n|y)=^ns_{\lambda}(x_{1}+y_{-1},x_{2}+y_{-2},\cdots,x_{n}+y_{-n}|\tau^{n+1}y).
\end{equation}
Under the change of variables $x_{i}'=x_{i}+y_{-i}$ for $1\leq i\leq
n$, the shifted double Schur function
$^ns_{\lambda}^{*}(x_{1},\cdots,x_{n}|y)$ becomes the double Schur
function $^ns_{\lambda}(x_{1}',\cdots,x_{n}'|\tau^{n+1}y)$. Notice
that the shifted double Schur function
$^ns_{\lambda}^{*}(x_{1},\cdots,x_{n}|y)$ is the shifted Schur
function defined in \cite{OO} if $y=(y_{k})_{k}$ is the sequence
defined by the relation $y_{k}=\mbox{constant}+k$ for all $k$.
The shifted double Schur functions
$^{n}s_{\lambda}^{*}(x_{1},\cdots,x_{n}|y)$ have the following
stability property:
\begin{prop}\label{stab}
If $l(\lambda)<n$, then
\begin{equation*}
^{n+1}s_{\lambda}^{*}(x_{1},\cdots,x_{n},0|y)=^{n}s_{\lambda}^{*}(x_{1},\cdots,x_{n}|y).
\end{equation*}
Here $l(\lambda)$ is the length of a partition $\lambda$.
\end{prop}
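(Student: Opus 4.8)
The plan is to reduce the statement to a stability property of the ordinary double Schur functions and then prove the latter by manipulating the ratio-of-determinants definition of ${}^{m}s_{\lambda}$. First I would unwind the definition \eqref{sds1}. Writing $w_{i}=x_{i}+y_{-i}$ for $1\le i\le n$ and setting $z=\tau^{n+1}y$, the right-hand side is by definition ${}^{n}s_{\lambda}(w_{1},\dots,w_{n}|z)$. On the left-hand side the extra variable is $x_{n+1}+y_{-(n+1)}=y_{-(n+1)}$, and since $y_{-(n+1)}=(\tau^{n+1}y)_{0}=z_{0}$ and $\tau^{n+2}y=\tau z$, the left-hand side equals ${}^{n+1}s_{\lambda}(w_{1},\dots,w_{n},z_{0}|\tau z)$. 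Thus it suffices to establish the identity
$$ {}^{n+1}s_{\lambda}(w_{1},\dots,w_{n},z_{0}\,|\,\tau z)={}^{n}s_{\lambda}(w_{1},\dots,w_{n}\,|\,z). $$

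Next I would write both sides through the determinantal definition. The key elementary fact is the vanishing $(z_{0}|\tau z)^{p}=\prod_{k=1}^{p}(z_{0}-(\tau z)_{k})=\prod_{k=1}^{p}(z_{0}-z_{k-1})$, whose $k=1$ factor is $z_{0}-z_{0}=0$; hence $(z_{0}|\tau z)^{p}=0$ for $p\ge 1$ and equals $1$ for $p=0$. Consider the $(n+1)\times(n+1)$ numerator determinant $\det[(u_{i}|\tau z)^{\lambda_{j}+(n+1)-j}]$ with $u_{i}=w_{i}$ for $i\le n$ and $u_{n+1}=z_{0}$. In its last row the exponent $\lambda_{j}+(n+1)-j$ vanishes only when $j=n+1$ and $\lambda_{n+1}=0$; the hypothesis $l(\lambda)<n$ guarantees $\lambda_{n}=0$, hence $\lambda_{n+1}=0$, so the last row is $(0,\dots,0,1)$. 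Laplace expansion along this row (the cofactor sign is $+$ since the nonzero entry sits in the corner) collapses the numerator to the $n\times n$ determinant $\det[(w_{i}|\tau z)^{\lambda_{j}+(n+1)-j}]_{i,j=1}^{n}$. The identical reasoning applied to the denominator $\det[(u_{i}|\tau z)^{(n+1)-j}]$ collapses it to $\det[(w_{i}|\tau z)^{(n+1)-j}]_{i,j=1}^{n}$.

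Finally I would eliminate the shift $\tau z$ in favour of $z$. Since every exponent occurring in the two surviving $n\times n$ determinants is at least $1$ (because $(n+1)-j\ge 1$ for $j\le n$), I can use the factorization $(w_{i}|\tau z)^{p}=(w_{i}-z_{0})\,(w_{i}|z)^{p-1}$ to pull the common factor $(w_{i}-z_{0})$ out of the $i$-th row, producing the scalar $\prod_{i=1}^{n}(w_{i}-z_{0})$ in front of both the numerator $\det[(w_{i}|z)^{\lambda_{j}+n-j}]_{i,j=1}^{n}$ and the denominator $\det[(w_{i}|z)^{n-j}]_{i,j=1}^{n}$. These prefactors cancel in the quotient, and what remains is exactly the ratio defining ${}^{n}s_{\lambda}(w_{1},\dots,w_{n}|z)$, which proves the reduced identity and hence the proposition.

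I expect the only genuine obstacle to be the bookkeeping of the shift operator: tracking that $y_{-(n+1)}$ coincides with the value $z_{0}$ of the relabelled sequence $z=\tau^{n+1}y$, that the new shift is precisely $\tau z=\tau^{n+2}y$, and that the hypothesis enters exactly through $\lambda_{n+1}=0$, which is what prevents the last row from vanishing identically. Everything else is routine cofactor expansion and row factorization, so no analytic input is needed; one could alternatively run the same collapse through the generalized Jacobi–Trudi formula \eqref{GJT}, but the bialternant form makes the single-nonzero-entry phenomenon most transparent.
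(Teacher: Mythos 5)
Your proof is correct and complete. The paper itself states Proposition \ref{stab} \emph{without proof} (it serves only to justify the definition \eqref{sds} of $s_{\lambda}^{*}(x|y)$ in infinitely many variables), so there is no in-paper argument to compare against; your bialternant computation is the standard route, and all the shift bookkeeping checks out: $y_{-(n+1)}=(\tau^{n+1}y)_{0}=z_{0}$, $\tau^{n+2}y=\tau z$, the vanishing $(z_{0}|\tau z)^{p}=0$ for $p\geq 1$ reduces the last row of both the numerator and denominator determinants to $(0,\dots,0,1)$, and the factorization $(w_{i}|\tau z)^{p}=(w_{i}-z_{0})(w_{i}|z)^{p-1}$ cancels the common prefactor $\prod_{i=1}^{n}(w_{i}-z_{0})$. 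One small observation: your argument only ever uses $\lambda_{n+1}=0$, so it in fact proves the stability under the weaker hypothesis $l(\lambda)\leq n$, with the stated hypothesis $l(\lambda)<n$ a fortiori sufficient.
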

Using this property, we can define the shifted double Schur function
$s_{\lambda}^*(x|y)$ depending on infinite number of arguments
$x=(x_i)_{i\in\mb N}$ and $y=(y_j)_{j\in\mb Z}$  (we assume that only finite number of variables $x_i$
does not vanish). Namely, we define
\begin{equation}\label{sds}
s_{\lambda}^{*}(x|y)=\ ^{n}s_{\lambda}^{*}(x_{1},\cdots,x_{n}|y)
\end{equation}
where $n$ is chosen in such a way that $n>l(\lambda)$ and $x_i=0$
for $i>n$. The coefficients in the decomposition
\begin{equation}\label{coeff1}
s_{\lambda}^{*}(x|y)s_{\mu}^{*}(x|y)=\sum
C_{\lambda\mu}^{\nu}(y)s_{\nu}^{*}(x|y).
\end{equation}
can be obtained from (\ref{coeffl}) and from the results of \cite
{Mol}, \cite {Mol2} or \cite {KT}. In fact, the relation between
$C_{\lambda\mu}^{\nu}$ and $^{n}c_{\lambda\mu}^{\nu}$ is given by
\begin{equation*}
C_{\lambda\mu}^{\nu}(y)=\ ^{n}c_{\lambda\mu}^{\nu}(\tau^{n+1}y)
\end{equation*}
for $n>l(\lambda),l(\mu),l(\nu)$.

The shifted double Schur functions belong to the ring $\Lambda^{*}
(x\|y)$ that consists of polynomial functions that depend on sequences
$x=(x_{n})_{n\in \mb N}$ and $y=(y_{i})_{i\in\mb Z}$ and are symmetric
with respect to shifted variables $x_{i}'=x_{i}+y_{-i}$ (we assume
that $x_n=0$ for $n>>0$). Moreover, they form a free system of
generators of $\Lambda^{*} (x\|y)$ considered as $\mb C[y]$-module.
Notice that the ring $\Lambda^{*}(x\|y)$ is obviously isomorphic to
the ring $\Lambda(x\|y)$ constructed in \cite {Mol2}. The following
theorem describes the multiplication in the equivariant cohomology
$H_{S^1}(\Gr_d(\hs))$ for non-standard action of $S^1$:

\begin {thm}\label{t2}
\begin{equation*}
\Omega_{\lambda}^{T}\Omega_{\mu}^{T}=\sum_{\nu}C_{\lambda\mu}^{\nu}(n)\Omega_{\nu}^{T},
\end{equation*}
where $C_{\lambda\mu}^{\nu}(n)$ are the coefficients in (\ref
{coeff1}) calculated for $y_k=n_{k+d}u$ and $n_k$ denotes the
sequence specifying the action of $S^1.$
\end {thm}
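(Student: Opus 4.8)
The plan is to deduce Theorem \ref{t2} from the multiplication rule (\ref{coeff1}) for shifted double Schur functions, in exactly the way Theorem \ref{coeff0} is deduced from (\ref{coef1}) for ordinary shifted Schur functions. The mechanism is the Atiyah--Bott localization theorem, which I would use to produce a ring isomorphism between $H_{S^1}(\Gr_d(\hs))$ and the ring $\Lambda^*(x\|y)$ after the specialization $y_k=n_{k+d}u$, sending the Schubert class $\Omega_\lambda^T$ to the shifted double Schur function $s_\lambda^*(x|y)$. Since the shifted double Schur functions form a free $\mb C[y]$-basis of $\Lambda^*(x\|y)$ and the Schubert classes form a free $\mb C[u]$-basis of $H_{S^1}(\Gr_d(\hs))$, once this correspondence is shown to be a morphism of rings the equality of structure constants is automatic.

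First I would describe the fixed-point set. Because the non-standard action $a_k\mapsto\alpha^{n_k}a_k$ is diagonal, the fixed points of $\Gr_d(\hs)$ are precisely the coordinate subspaces $W_S=\overline{\Sp\{z^s:s\in S\}}$, indexed by Maya diagrams $S\subset\mb Z$ of charge $d$, equivalently by partitions $\nu$ through $S_\nu=\{\nu_i-i+d:i\geq 1\}$. At the fixed point $p_\nu=W_{S_\nu}$ the tangent space splits into one-dimensional weight spaces whose weights have the form $(n_j-n_i)u$ for pairs with $i\in S_\nu,\ j\notin S_\nu$, and the equivariant Euler class is the product of these weights. Localization then gives that the restriction homomorphism $H_{S^1}(\Gr_d(\hs))\to\prod_\nu H_{S^1}(p_\nu)=\prod_\nu\mb C[u]$ is an injective ring map, so the product $\Omega_\lambda^T\Omega_\mu^T$ is completely determined by the pointwise products $\Omega_\lambda^T|_{p_\nu}\cdot\Omega_\mu^T|_{p_\nu}$.

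The heart of the argument is to compute $\Omega_\lambda^T|_{p_\nu}$ and to match it with a value of the shifted double Schur function. I would express this restriction as a ratio of products of tangent weights associated to the Schubert variety $\bar{\Sigma}_\lambda$ passing through $p_\nu$, and then identify the resulting rational expression in $u$ with $s_\lambda^*$ evaluated at the point determined by $\nu$, once $y_k$ is set to $n_{k+d}u$. The determinantal form (\ref{GJT}) together with the factorization $(x_i|y)^p=\prod_{j=1}^p(x_i-y_j)$ is well suited to recognizing a product of weights $(n_j-n_i)u$ as such a shifted double Schur value; the index shift by $d$ enters through the charge of the Maya diagram, and the single parameter $u$ appears because each integer weight $n_k$ is multiplied by the generator of $H_{S^1}(\mathrm{pt})$. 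This also recovers Theorem \ref{coeff0} as the special case $n_k=k$, where $y_k=(k+d)u$ has the form $\text{const}+k$ and $s_\lambda^*$ degenerates to the Okounkov--Olshanski shifted Schur function.

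The main obstacle I anticipate is twofold. First, the fixed-point computation of $\Omega_\lambda^T|_{p_\nu}$ must be carried out on the finite-codimension Schubert cells, and one must verify that the ratios of weights assemble \emph{exactly} into the determinant (\ref{GJT}), not merely into a formula that agrees on leading terms; this identification is the genuine combinatorial crux, and it is where the precise definition (\ref{sds1}) of $s_\lambda^*$ and the shift $\tau^{n+1}y$ must be matched against the geometry. Second, since $\Gr_d(\hs)$ is infinite-dimensional, I would justify localization on the $S^1$-invariant CW model $K=\cup\sigma_\lambda$ supplied by \cite{PS}, checking that only finitely many fixed points contribute to each restriction and that the relevant Euler classes become invertible in $\mb C[u,u^{-1}]$. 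Granting these points, the identification of rings is a ring isomorphism, so the structure constants coincide with those of (\ref{coeff1}) evaluated at $y_k=n_{k+d}u$, which is precisely the assertion of the theorem.
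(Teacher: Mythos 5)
Your strategy --- restrict Schubert classes to torus-fixed points, identify the restrictions with values of shifted double Schur functions, and read off the structure constants --- is essentially the mechanism the paper uses for the \emph{standard} action in Section 3 (Lemma \ref{fix}). But as a proof of Theorem \ref{t2} it has a genuine gap: the theorem allows an \emph{arbitrary} doubly infinite integer sequence $(n_k)$, and when weights repeat ($n_i=n_j$ for some $i\neq j$) your description of the fixed locus is false. A closed subspace is $S^1$-fixed iff it is the direct sum of its intersections with the weight subspaces $\overline{\operatorname{span}}\{z^k: n_k=m\}$, so whenever a weight space has dimension at least two the fixed locus contains whole sub-Grassmannians, not just the coordinate subspaces $W_S$; correspondingly the tangent weights $(n_j-n_i)u$ you need to invert vanish, the relevant Euler classes are not invertible in $\mb C[u,u^{-1}]$, and Atiyah--Bott localization with isolated fixed points is simply unavailable. (Restriction to the fixed locus is still injective for an equivariantly formal space, but the target is then the equivariant cohomology of positive-dimensional components, and your pointwise evaluation argument no longer pins down the coefficients.) The paper sidesteps exactly this degeneracy: it first proves the multiplication rule for the infinite torus $\mb T$ (Theorem \ref{coeff}), where the weights $u_k$ are independent variables, and then deduces Theorem \ref{t2} by the purely algebraic specialization along the homomorphism $S^1\to\mb T$ determined by $(n_k)$, i.e. $u_k\mapsto n_k u$, $y_k\mapsto n_{k+d}u$ --- an operation on polynomial identities that is insensitive to coincidences among the $n_k$.

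There is a second gap at what you yourself flag as the crux: the identification of the restriction of $\Omega_\lambda^T$ at a fixed point with a shifted double Schur value is asserted, not proved, and your description of that restriction as a ratio of products of tangent weights is off --- restrictions of Schubert classes to fixed points are \emph{polynomials} in the weights; ratios of Euler classes enter pushforward and integration formulas, not restrictions. The paper's actual input here is the Kempf--Laksov determinantal formula (\ref{schubert}) on the finite-rank submanifolds $\Gr_d^l$, expressing $\Omega_{\lambda,l}^T$ as a determinant in equivariant Chern classes of $\underline{\hs}_{-l,\lambda_i-i+d-1}-\mc E_l$; the generalized Jacobi--Trudi identity (\ref{GJT}) then identifies this determinant globally (not merely at fixed points) with $^{d+l}s_\lambda^*(x_1,\cdots,x_{d+l}|y)$ in the shifted Chern roots, which yields the ring isomorphism $\Omega_\lambda^T\mapsto s_\lambda^*(x|y)$ with no localization at all. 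If you want to salvage your route, you must either assume the $n_k$ pairwise distinct and supply the fixed-point computation via (\ref{schubert}) and (\ref{GJT}), or, as the paper does, run the argument for $\mb T$ and specialize.
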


The Theorem \ref{coeff0} is a particular case of Theorem \ref {t2}
for $n_k=k.$\\

Let us consider the infinite-dimensional torus  $\mb T$ and its
action on the Grassmannian.  Algebraically the infinite torus $\mb
T$ is the infinite direct product $\prod_{i\in\mb Z}S^{1}$. The
action of $\mb T$ on Grassmannian corresponds to the action on $\hs$
transforming $a_{k}\to \alpha_{k}a_{k}$, where $(\alpha_{k})\in\mb
T$ and $f=\sum_{n}a_{n}z^{n}\in\hs$. This action specifies an
embedding of $\mb T$ into the group of unitary transformations of
$\hs$; the topology of $\mb T$ is induced by this embedding.

One can prove that the equivariant cohomology $H_{\mb T}(pt)$
(cohomology of the classifying space $B_{\mb T}$) is isomorphic to
the polynomial ring $\mb C [{\bf u}]$ where $\bf u$ stands for the
doubly infinite sequence $u_k$. The proof is based on the
consideration of homomorphisms of $\mb T$ onto finite-dimensional
tori and homomorphisms of $S^1$ into $\mb T.$ The finite
codimensional Schubert cells are $\mb T$-invariant and hence the
equivariant cohomology $H_{\mb T}(\Gr_d (\hs))$ is a free $H_{\mb
T}(pt)$ -module generated by cohomology classes labeled by
partitions; we denote these classes by $\Omega_{\lambda}^{T}.$

The submanifold $\Gr_{d}^{l}$ of $\Gr_{d}(\hs)$ consisting of points
$W$ so that the orthogonal projection $\pi_{l}:W\to z^{-l}\hs_{-}$
is surjective. The intersection of Schubert cycle
$\bar{\Sigma}_{\lambda}$ and $\Gr_{d}^{l}$ is denoted by
$\bar{\Sigma}_{\lambda,l}$. The  equivariant cohomology class   in
$H_{\mb T}^{*}(\Gr_{d}^{l})$ corresponding to
$\bar{\Sigma}_{\lambda,l}$ is denoted by $\Omega_{\lambda,l}^{T}$ .
The Schubert cycle $\bar{\Sigma}_{\lambda}$ and $\Gr_{d}^{l}$ are in
general position if $l(\lambda)<d+l$; then we have
\begin{equation*}
f_{l}^{*}\Omega_{\lambda}^{T}=\Omega_{\lambda,l}^{T},
\end{equation*}
where $f_{l}^{*}:H_{\mb T}^{*}(\Gr_{d}(\hs))\to H_{\mb
T}^{*}(\Gr_{d}^{l})$ is map induced by the inclusion map
$f_{l}:\Gr_{d}^{l}\to \Gr_{d}(\hs)$.

The classes  $\Omega_{\lambda,l}^{T}$ for $l(\lambda)<d+l$ form an
additive system of generators of equivariant cohomology. It follows
from (\ref{coeffl}) that
\begin{thm}
The multiplication table in $H_{\mb T}^{*}(\Gr_{d}^{l})$ is given by
\begin{equation*}
\Omega_{\lambda,l}^{T}\Omega_{\mu,l}^{T}=\sum_{\nu}\
^{d+l}c_{\lambda\mu}^{\nu}(y)\Omega_{\nu,l}^{T},
\end{equation*}
where $y=(y_{i})_{i\in\mb Z}$ is the sequence defined by
$y=\tau^{l+1}\mathbf{u}$.
\end{thm}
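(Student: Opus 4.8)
The plan is to turn the statement into a problem of equivariant Schubert calculus on a \emph{finite}-dimensional Grassmannian. Concretely, I would identify $H_{\mb T}^{*}(\Gr_{d}^{l})$ with the ring in which the double Schur functions $^{d+l}s_{\lambda}(x|y)$ live, match the generators $\Omega_{\lambda,l}^{T}$ with these functions, and then read off the multiplication table directly from the Littlewood-Richardson expansion (\ref{coeffl}). The only genuine content is the identification of the generators with double Schur functions and the correct bookkeeping of the torus weights; the product formula is then a formal transcription.

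First I would make the geometric identification explicit. For $W\in\Gr_{d}^{l}$ the surjectivity of $\pi_{l}$ forces $K:=\ker\pi_{l}=W\cap V_{l}$, with $V_{l}=\Sp\{z^{m}:m\geq -l\}$, to be a $(d+l)$-dimensional subspace of $V_{l}$ (for the base point $\Sp\{z^{m}:m<d\}$ one checks the kernel has exactly this dimension and $\pi_{l}$ is onto), and $W$ is recovered from $K$ together with a choice of section of $\pi_{l}$ whose space of choices is $\mb T$-equivariantly contractible. Thus $W\mapsto K$ exhibits $\Gr_{d}^{l}$, up to $\mb T$-equivariant homotopy, as the Grassmannian $\Gr(d+l,V_{l})$, on which $\mb T$ acts coordinatewise with weights $u_{-l},u_{-l+1},u_{-l+2},\dots$. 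Under this map the finite-codimensional cells $\bar{\Sigma}_{\lambda,l}$ with $l(\lambda)<d+l$ go over to the torus-invariant Schubert varieties, so that $\Omega_{\lambda,l}^{T}$ becomes the equivariant Schubert class of $\Gr(d+l,V_{l})$.

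Second, I would invoke the equivariant Schubert calculus of the Grassmannian: under the standard isomorphism of $H_{\mb T}^{*}(\Gr(d+l,V_{l}))$ with the ring generated by the double (factorial) Schur functions, the Schubert class of $\bar{\Sigma}_{\lambda,l}$ is carried to $^{d+l}s_{\lambda}(x|y)$, the $y$-variables being the list of torus weights of $V_{l}$ read off in order. Reading $u_{-l},u_{-l+1},\dots$ as $y_{1},y_{2},\dots$ gives $y_{j}=u_{j-l-1}$, that is $y=\tau^{l+1}\mathbf u$, which is exactly the sequence in the statement. Hence $\Omega_{\lambda,l}^{T}\mapsto\ ^{d+l}s_{\lambda}(x|y)$ and the product $\Omega_{\lambda,l}^{T}\Omega_{\mu,l}^{T}$ corresponds to $^{d+l}s_{\lambda}(x|y)\,^{d+l}s_{\mu}(x|y)$, whose expansion (\ref{coeffl}) yields the coefficients $^{d+l}c_{\lambda\mu}^{\nu}(y)$, proving the theorem. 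As a consistency check one may pull back along $f_{l}\colon\Gr_{d}^{l}\hookrightarrow\Gr_{d}(\hs)$: since $f_{l}^{*}$ is a ring map with $f_{l}^{*}\Omega_{\lambda}^{T}=\Omega_{\lambda,l}^{T}$ for $l(\lambda)<d+l$, applying it to the multiplication on $\Gr_{d}(\hs)$ (with sequence $y=\tau^{-d}\mathbf u$) and using the stabilization relation $C_{\lambda\mu}^{\nu}(y)=\ ^{d+l}c_{\lambda\mu}^{\nu}(\tau^{d+l+1}y)$ gives $^{d+l}c_{\lambda\mu}^{\nu}(\tau^{l+1}\mathbf u)$, in agreement with the first route, provided the classes $\Omega_{\nu,l}^{T}$ with $l(\nu)\geq d+l$ do not contribute.

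The hard part will be the second paragraph: rigorously establishing the $\mb T$-equivariant homotopy equivalence of $\Gr_{d}^{l}$ with $\Gr(d+l,V_{l})$, in particular controlling the infinite-dimensional section fibre and verifying it is equivariantly contractible so that both the equivariant cohomology and the Schubert stratification are preserved, and then matching the torus weights precisely enough to produce the shift $\tau^{l+1}$ rather than some neighbouring translate. A secondary point requiring care is confirming, in whichever route is used, that partitions $\nu$ with $l(\nu)\geq d+l$ contribute nothing, which should follow because $\bar{\Sigma}_{\nu}$ then fails to meet $\Gr_{d}^{l}$ in the expected dimension and the associated class vanishes. Once the Schubert classes are correctly identified with $^{d+l}s_{\lambda}(x|y)$, the remaining step is the purely formal application of (\ref{coeffl}).
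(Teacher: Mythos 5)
Your proposal is correct in substance and lands on the right identification, but it establishes the key step by a genuinely different route than the paper. The paper never constructs the equivariant homotopy equivalence $\Gr_{d}^{l}\simeq \Gr(d+l,V_{l})$ that you yourself flag as the hard part; instead it works directly on $\Gr_{d}^{l}$ with the rank-$(d+l)$ kernel bundle $\mc E_{l}$ and applies the equivariant Kempf--Laksov determinantal formula (\ref{schubert}),
\begin{equation*}
\Omega_{\lambda,l}^{T}=\det\left[c_{\lambda_{i}+j-i}^{T}(\underline{\hs}_{-l,\lambda_{i}-i+d-1}-\mc E_{l})\right],
\end{equation*}
expands these Chern classes in the equivariant Chern roots $x_{1}',\dots,x_{d+l}'$ of $\mc E_{l}^{\vee}$ and the torus weights, and recognizes the resulting determinant via the generalized Jacobi--Trudi identity (\ref{GJT}) as $^{d+l}s_{\lambda}(x_{1}',\dots,x_{d+l}'\,|\,\tau^{l+1}\mathbf{u})$; this is the theorem of Section 4 (proved ``as in Lemma \ref{fix}''), stated there as $\Omega_{\lambda,l}^{T}={}^{d+l}s_{\lambda}^{*}(x|y)$ with $y=\tau^{-d}\mathbf{u}$, which recovers your sequence since $\tau^{d+l+1}\tau^{-d}=\tau^{l+1}$. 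After the identification, the multiplication table is in both treatments the same formal transcription of (\ref{coeffl}). Your route buys geometric transparency: it explains why the parameters are exactly the weights $u_{-l},u_{-l+1},\dots$ of $V_{l}$, i.e.\ why the shift is $\tau^{l+1}$ rather than a neighbouring translate, and your bookkeeping $y_{j}=u_{j-l-1}$ agrees with the paper's. The cost is the fibration-with-equivariantly-contractible-fibres argument plus a citation of the Schubert-class/factorial-Schur dictionary for $\Gr(d+l,\infty)$, which in the sources the paper cites (\cite{Ful}, \cite{KL}) is itself proved by Kempf--Laksov; the paper short-circuits your reduction by applying that formula directly to $\mc E_{l}$, at the price of invoking it in the equivariant, infinite-dimensional setting without the geometric picture.

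One small correction to your secondary point: only partitions with $l(\nu)>d+l$ drop out automatically, since $^{d+l}s_{\nu}=0$ exactly when $l(\nu)>d+l$; classes with $l(\nu)=d+l$ are nonzero on $\Gr_{d}^{l}$ and may occur in the product. This does not affect your main route, but in your consistency check the relation $f_{l}^{*}\Omega_{\nu}^{T}=\Omega_{\nu,l}^{T}$ is only guaranteed for $l(\nu)<d+l$ (that is the general-position hypothesis), so the boundary terms with $l(\nu)=d+l$ would need separate handling there.
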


Using the coefficients in (\ref{coeff1}), we can calculate
the multiplication table in equivariant cohomology of Grassmannian
$\Gr_{d}(\hs)$ with respect to the infinite torus action. Namely,

\begin {thm}\label{coeff}
The multiplication table in $H_{\mb T}^{*}(\Gr_{d}(\hs))$ is given
by:
\begin{equation*}
\Omega_{\lambda}^{T}\Omega_{\mu}^{T}=\sum_{\nu}C_{\lambda\mu}^{\nu}(y)\Omega_{\nu}^{T},
\end{equation*}
where $(y_{k})$ is the sequence given by the relation
$y=\tau^{-d}\mathbf{u}$. The coefficient in this equation comes from
the formula (\ref {coeff1}); it is considered as an element of
$H_{\mb T}(pt)$ .
 \end {thm}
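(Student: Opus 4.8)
The plan is to reduce the multiplication in $H_{\mb T}^{*}(\Gr_{d}(\hs))$ to the already-computed multiplication on the finite-codimensional submanifolds $\Gr_{d}^{l}$ by means of the pullbacks $f_{l}^{*}$, and then to pass to the limit $l\to\infty$ using the compatibility relation between the coefficients $C_{\lambda\mu}^{\nu}$ of (\ref{coeff1}) and the coefficients ${}^{n}c_{\lambda\mu}^{\nu}$ of (\ref{coeffl}).

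Since $\{\Omega_{\nu}^{T}\}$ is a free system of generators of $H_{\mb T}^{*}(\Gr_{d}(\hs))$ over $H_{\mb T}(pt)$, I would first write $\Omega_{\lambda}^{T}\Omega_{\mu}^{T}=\sum_{\nu}a_{\lambda\mu}^{\nu}\,\Omega_{\nu}^{T}$ with uniquely determined $a_{\lambda\mu}^{\nu}\in H_{\mb T}(pt)$. As $\Omega_{\kappa}^{T}$ has cohomological degree $2|\kappa|$, homogeneity forces $\deg a_{\lambda\mu}^{\nu}=2(|\lambda|+|\mu|-|\nu|)$, so only the finitely many partitions $\nu$ with $|\nu|\le|\lambda|+|\mu|$ can occur; in particular the sum is finite, so a single $l$ can later be chosen to handle all of its terms at once.

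Next I would choose $l$ so large that $l(\lambda),l(\mu)<d+l$ and $l(\nu)<d+l$ for every $\nu$ appearing in this finite expansion. Applying the ring homomorphism $f_{l}^{*}$ and using $f_{l}^{*}\Omega_{\kappa}^{T}=\Omega_{\kappa,l}^{T}$ whenever $l(\kappa)<d+l$ yields $\Omega_{\lambda,l}^{T}\Omega_{\mu,l}^{T}=\sum_{\nu}a_{\lambda\mu}^{\nu}\,\Omega_{\nu,l}^{T}$. Comparing this with the multiplication table on $\Gr_{d}^{l}$ established above, namely $\Omega_{\lambda,l}^{T}\Omega_{\mu,l}^{T}=\sum_{\nu}{}^{d+l}c_{\lambda\mu}^{\nu}(\tau^{l+1}\mathbf{u})\,\Omega_{\nu,l}^{T}$, and using that the classes $\Omega_{\nu,l}^{T}$ with $l(\nu)<d+l$ form a free system of generators, I would match coefficients to conclude $a_{\lambda\mu}^{\nu}={}^{d+l}c_{\lambda\mu}^{\nu}(\tau^{l+1}\mathbf{u})$.

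It then remains to rewrite this in terms of the $C_{\lambda\mu}^{\nu}$. By the relation $C_{\lambda\mu}^{\nu}(y)={}^{n}c_{\lambda\mu}^{\nu}(\tau^{n+1}y)$ stated just after (\ref{coeff1}), valid for $n>l(\lambda),l(\mu),l(\nu)$, the choice $n=d+l$ and $y=\tau^{-d}\mathbf{u}$ gives $\tau^{n+1}y=\tau^{d+l+1}\tau^{-d}\mathbf{u}=\tau^{l+1}\mathbf{u}$, so $C_{\lambda\mu}^{\nu}(\tau^{-d}\mathbf{u})={}^{d+l}c_{\lambda\mu}^{\nu}(\tau^{l+1}\mathbf{u})=a_{\lambda\mu}^{\nu}$, which is exactly the claimed formula. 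The main obstacle here is not the shift bookkeeping but the legitimacy of the coefficient comparison: one must ensure that $f_{l}^{*}$ restricts to an injection on the submodule spanned by $\{\Omega_{\nu}^{T}:l(\nu)<d+l\}$, which rests on the freeness of the $\Omega_{\nu,l}^{T}$, and that $a_{\lambda\mu}^{\nu}$ is independent of the choice of large $l$; the latter is automatic, since for every admissible $l$ the compatibility relation forces ${}^{d+l}c_{\lambda\mu}^{\nu}(\tau^{l+1}\mathbf{u})$ to equal the single fixed element $C_{\lambda\mu}^{\nu}(\tau^{-d}\mathbf{u})$ of $H_{\mb T}(pt)$.
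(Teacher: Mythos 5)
Your proof is correct, and it reaches the theorem by a genuinely different (more hands-on) route than the paper. The paper finishes Theorem \ref{coeff} in Section 4 by first proving, exactly as in Lemma \ref{fix} (Kempf--Laksov plus the generalized Jacobi--Trudi identity), that $\Omega_{\lambda,l}^{T}={}^{d+l}s_{\lambda}^{*}(x_{1},\dots,x_{d+l}|y)$ in the shifted equivariant Chern roots of $\mathcal{E}_{l}^{\vee}$ with $y=\tau^{-d}\mathbf{u}$, and then invoking the resulting algebra isomorphism $H_{\mb T}^{*}(\Gr_{d}(\hs))\cong\Lambda^{*}(x\|y)$, $\Omega_{\lambda}^{T}\mapsto s_{\lambda}^{*}(x|y)$, so that the multiplication table is read off directly from (\ref{coeff1}). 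You instead take the finite-level table on $\Gr_{d}^{l}$ as a black box and carry out the $l\to\infty$ stabilization explicitly: free expansion over $H_{\mb T}(pt)$, the degree bound $\deg a_{\lambda\mu}^{\nu}=2(|\lambda|+|\mu|-|\nu|)$ forcing a finite sum, pullback along $f_{l}^{*}$, coefficient matching against the freeness of the $\Omega_{\nu,l}^{T}$, and the shift bookkeeping $\tau^{d+l+1}\tau^{-d}\mathbf{u}=\tau^{l+1}\mathbf{u}$ identifying $a_{\lambda\mu}^{\nu}$ with $C_{\lambda\mu}^{\nu}(\tau^{-d}\mathbf{u})$. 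What your version buys is that it makes rigorous precisely the points the paper's isomorphism claim leaves implicit (well-definedness, finiteness, and $l$-independence of the structure constants); what the paper's version buys is the stronger structural statement that $H_{\mb T}^{*}(\Gr_{d}(\hs))$ is isomorphic as a ring to $\Lambda^{*}(x\|y)$, of which the multiplication table is a corollary, and note that the finite-level table you quote is itself derived in the paper from the same Chern-root identification. One small point to tighten: when you compare with the expansion on $\Gr_{d}^{l}$, you also need every $\nu$ occurring \emph{there} to satisfy $l(\nu)<d+l$, so that both sides are expansions in the same free generating set; this follows from your own degree argument if you simply choose $l$ with $d+l>|\lambda|+|\mu|$, since then $l(\nu)\le|\nu|\le|\lambda|+|\mu|<d+l$ for every contributing $\nu$ on either side.
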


Using homomorphisms of $S^1$ into $\mb T$ one can derive this
theorem from Theorem \ref {t2} (conversely, one can deduce Theorem
\ref {t2} from Theorem \ref {coeff}). It follows from Theorem \ref
{coeff} that the equivariant cohomology $H_{\mb
T}^{*}(\Gr_{d}(\hs))$ is isomorphic to the ring $\Lambda^{*}
(x\|y)$.

We can introduce a comultiplication in the ring $H_{\mb
T}^{*}(\Gr_{d}(\hs))$ using the map
\begin{equation}\label{product}
\rho:\Gr_0(\hs ')\times\Gr_0(\hs '')\to \Gr_0(\hs'\oplus \hs'')
\end{equation}
defined by $(V,W)\mapsto V\oplus W$. Namely, we take $\hs '=\hs
_{even}$ (the subspace spanned by $z^{2k}$) and $\hs ''=\hs _{odd}$
(the space spanned by $z^{2k+1}$). Then $\hs'\oplus \hs ''=\hs$ and
the map $\rho$ determines a homomorphism of $\mb T$- equivariant
cohomology
\begin{equation*}
H_{\mb T}(\Gr_0(\hs))\to H_{\mb T}(\Gr_0(\hs _{even})\times\Gr_0(\hs
_{odd})).
\end{equation*}
It is easy to prove that $H_{\mb T}(\Gr_0(\hs _{even})\times\Gr_0(\hs
_{odd}))$ is isomorphic to tensor product of two copies of  $H_{\mb
T}^{*}(\Gr_0(\hs))$; we obtain a comultiplication $\Delta$ in $H_{\mb
T}^{*}(\Gr_0(\hs)).$

Let us introduce the notion of the $k$-th shifted power sum function
in $\Lambda^{*}(x\|y)$:
\begin{equation}\label{spower}
p_{k}(x|y)=\sum_{i=1}^{\infty}\left[(x_{i}+y_{-i})^{k}-y_{-i}^{k}\right].
\end{equation}
Here we assume that $x_{n}=0$ for $n\gg 0$; hence (\ref{spower}) is
a finite sum. We will prove that
\begin{equation}\label{comp}
\Delta p_{k}=p_{k}\otimes 1+1\otimes p_{k},\quad k\geq 1.
\end{equation}
In other words, we have the following theorem:

\begin {thm}\label{com}
The comultiplication $\Delta$ in the ring $H_{\mb
T}^{*}(\Gr_{d}(\hs))$ defined above coincides with the
comultiplication in the ring $\Lambda^{*} (x\|y)=\Lambda(x\|y)$
constructed in \cite {Mol2}.
\end {thm}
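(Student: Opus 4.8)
The plan is to reduce Theorem \ref{com} to the single identity (\ref{comp}), that is, to the primitivity of the shifted power sums. The comultiplication on $\Lambda(x\|y)$ constructed in \cite{Mol2} is a homomorphism of algebras and is characterized by the requirement that the shifted power sums (\ref{spower}) be primitive. Over $\mb C$ the elements $p_{k}$, $k\geq 1$, generate $\Lambda^{*}(x\|y)$ as an algebra over its coefficient ring (the shifted power sums are a filtered deformation of the ordinary power sums, which generate symmetric functions in characteristic zero). The geometric comultiplication $\Delta$ is also an algebra homomorphism, being the composite of the ring map $\rho^{*}$ with the Künneth isomorphism. Hence two such comultiplications that agree on every $p_{k}$ must coincide, and it suffices to verify that the $\Delta$ obtained from $\rho$ satisfies (\ref{comp}).

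First I would give the geometric meaning of $p_{k}$. Over $\Gr_{0}(\hs)$ there is a tautological object assigning to $W$ the subspace $W$ itself; although $W$ is infinite-dimensional, the virtual difference $W\ominus\hs_{-}$ has finite rank and its $\mb T$-equivariant Chern character has well-defined components in $H_{\mb T}^{*}(\Gr_{0}(\hs))$. Under the isomorphism $H_{\mb T}^{*}(\Gr_{0}(\hs))\cong\Lambda^{*}(x\|y)$ that follows from Theorem \ref{coeff}, the equivariant Chern roots of this virtual bundle are exactly the shifted variables $x_{i}+y_{-i}$, while the weights of $\mb T$ on the reference space $\hs_{-}$ are the $y_{-i}$. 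Consequently $p_{k}$ is (up to the factor $k!$, which does not affect primitivity) the $k$-th component of $\ch(W\ominus\hs_{-})$: the subtraction of $y_{-i}^{k}$ in (\ref{spower}) is precisely the regularization that removes the reference contribution of $\hs_{-}$.

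Next I would compute $\rho^{*}p_{k}$. The map $\rho\colon(V,W)\mapsto V\oplus W$ pulls the tautological subspace over $\Gr_{0}(\hs'\oplus\hs'')$ back to the external direct sum of the two tautological subspaces, and the reference space splits compatibly as $\hs_{-}=(\hs_{even})_{-}\oplus(\hs_{odd})_{-}$. Since the equivariant Chern character is additive on direct sums, the $k$-th component of $\ch(V\oplus W\ominus\hs_{-})$ is the sum of the $k$-th components of $\ch(V\ominus(\hs_{even})_{-})$ and of $\ch(W\ominus(\hs_{odd})_{-})$. Under the Künneth isomorphism and the identifications $\Gr_{0}(\hs_{even})\cong\Gr_{0}(\hs)\cong\Gr_{0}(\hs_{odd})$, this additivity is exactly $\Delta p_{k}=p_{k}\otimes 1+1\otimes p_{k}$, with the understanding that the even-indexed parameters $u_{2k}$ of $\mb T$ become the $y$-variables of the first tensor factor and the odd-indexed $u_{2k+1}$ become those of the second, matching the splitting of $y$ used in \cite{Mol2}.

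The hard part will be this last bookkeeping together with the analytic justification of the Chern-character picture in infinite dimensions. Concretely, I must (i) make rigorous that $W\ominus\hs_{-}$ carries a well-defined equivariant Chern character whose components lie in the appropriately completed ring and coincide with the $p_{k}$ under Theorem \ref{coeff}, and (ii) check that the relabelings $z^{2k}\mapsto z^{k}$ and $z^{2k+1}\mapsto z^{k}$ send the even and odd halves of the sequence $\mathbf{u}$ to the two families of $y$-variables of Molev's tensor factors, so that the regularizing terms $y_{-i}^{k}$ distribute correctly across the tensor product. Once (i) and (ii) are established, additivity of the Chern character yields (\ref{comp}), and the reduction above then gives Theorem \ref{com}.
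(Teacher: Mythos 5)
Your proposal is correct, and its skeleton is exactly the paper's argument: reduce Theorem \ref{com} to the primitivity identity (\ref{comp}) (legitimate, since both comultiplications are algebra homomorphisms, Molev's $\Delta$ is characterized by making the power sums primitive, and the $p_{k}$ generate $\Lambda(x\|y)$ over $\mb C[y]$), then obtain (\ref{comp}) from additivity of the equivariant Chern character under $\rho^{*}$, together with the relabeling isomorphisms $\psi_{even},\psi_{odd}$. The one place you diverge is precisely the step you flag as the hard part (i): you phrase $p_{k}$ as a component of a regularized Chern character of the virtual object $W\ominus\hs_{-}$ over the full infinite-dimensional Grassmannian, which would force you to construct such a character in infinite dimensions. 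The paper never needs this. It restricts $\rho$ to the submanifolds $\Gr_{0}^{l}(\hs_{even})\times\Gr_{0}^{l}(\hs_{odd})\to\Gr_{0}^{l}$, where the tautological object is the genuine finite-rank equivariant bundle $\mc E_{l}$ (the kernel of $\pi_{l}$) and the reference space is the finite-dimensional trivial bundle $\underline{\hs}_{-l,-1}$, so that
\begin{equation*}
p_{k}(x_{1},\cdots,x_{l}|\mathbf{u})=\ch_{k}^{T}(\mc E_{l}-\underline{\hs}_{-l,-1})
\end{equation*}
is literally well defined; the splittings $\rho^{*}\mc E_{l}=\mc E_{l,even}\oplus\mc E_{l,odd}$ and $\rho^{*}\underline{\hs}_{-l,-1}=\mc F_{l,even}\oplus\mc F_{l,odd}$ then give (\ref{comp2}) for each $l$, and stability in $l$ promotes this to the identity (\ref{copro}) in $\Lambda(x\|\mathbf{u})$. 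So to complete your plan you should simply replace the virtual-bundle formulation by this finite-rank approximation, which is the same device the paper already uses in Section 2 to define the classes $\Omega_{\lambda,l}^{T}$; your item (i) then evaporates. Your item (ii) is verified exactly as you state it: the paper's $\psi_{even}$, $\psi_{odd}$ send $u_{2k}\mapsto u_{k}$ and $u_{2k+1}\mapsto u_{k}$ respectively, so the even and odd halves of $\mathbf{u}$ become the coefficient variables of the two tensor factors and the regularizing terms $u_{-i}^{k}$ in (\ref{pk}) distribute across the splitting as you describe. (The $k!$ normalization of $\ch_{k}^{T}$ you mention is immaterial for primitivity, and the paper drops it as well.)
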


\section{Equivariant Schubert classes}

The Grassmannian $\Gr(\hs)$ has a stratification in terms of
Schubert cells having finite codimension; it is a disjoint union of
$\mb T$-invariant submanifolds $\Sigma_{S}$ labeled by $S$, where
$S$ is a subset of $\mb Z$ such that the symmetric difference $\mb
Z_{-}\Delta S$ is a finite set. The Schubert cells $\Sigma_{S}$ are
in one-to-one correspondence with the $\mb T$-fixed points
$\hs_{S}$, where $\hs_{S}$ is the closed subspace of $\hs$ spanned
by $\{z^{s}:s\in S\}$ (the fixed point $\hs_{S}$ is contained in the
Schubert cell $\Sigma_{S}$). Instead of a subset $S$ of $\mb Z$, we
can consider a decreasing sequence $(s_{n})_{n}$ of integers. It is
easy to check that $s_{n}=-n+d$ for $n\gg 0$, where $d$ is the index
of $\hs_{S}$. The complex codimension of the Schubert cell
$\Sigma_{S}$ is given by the formula
\begin{equation*}
codim \Sigma_S=\sum_{i=1}^{\infty}(s_{i}+i-d).
\end{equation*}
The closure $\bar{\Sigma}_{S}$ of $\Sigma_{S}$ is called the
Schubert cycle of characteristic sequence $S$. It defines a
cohomology class in $H^*(\Gr(\hs))$ having dimension equal to $2codim \Sigma_S$. Since the Schubert cycle
$\bar{\Sigma}_{S}$ is $\mb T$-invariant, it specifies also an element
$\Omega_{S}^{T}$ in $H_{\mb T}^{*}(\Gr(\hs))$. Denote
$\lambda_{n}=s_{n}+n-d$ for $n\geq 1$. Then $(\lambda_{n})$ form a
partition. Instead of using the sequence $S$ to label the
(equivariant) cohomology class $\Omega_{S}^{T}$, we use the notation
$\Omega_{\lambda}^{T}$. Then the dimension of
$\Omega_{\lambda}^{T}$ is equal to $2|\lambda|$. Similarly, we
denote $\Sigma_{S}$ by $\Sigma_{\lambda}$. For more details  see
\cite{LS}, \cite{PS}.

Let us consider the submanifold $\Gr_{d}^{l}$ consisting of points
$W$ in $\Gr_{d}(\hs)$ such that the orthogonal projection
$\pi_{l}:W\to z^{-l}\hs_{-}$ is surjective. There is an equivariant
vector bundle of rank $n=d+l$ over $\Gr_{d}^{l}$ whose fiber over
$W$ is the kernel of the projection $\pi_{l}:W\to z^{-1}\hs_{-}$.
The action of $\mb T$ (or any action of $S^{1}$) on $\Gr_{d}(\hs)$
induces an action on $\Gr_{d}^{l}$. The equivariant Schubert cycle
$\bar{\Sigma}_{\lambda,l}$ in $\Gr_{d}^{l}$ is the intersection of
the equivariant Schubert cycle $\bar{\Sigma}_{\lambda}$ in
$\Gr_{d}(\hs)$ and $\Gr_{d}^{l}$. The dual equivariant cohomology
class of $\bar{\Sigma}_{\lambda,l}$ in $H_{\mb T}^{*}(\Gr_{d}^{l})$
is denoted by $\Omega_{\lambda,l}^{T}$. If $l(\lambda)<d+l$ where
$l(\lambda)$ means the length of the partition  $\lambda$ then the
Schubert cycle  $\bar{\Sigma}_{\lambda}$ and $\Gr_{d}^{l}$ are in
general position; hence we have
\begin{equation*}
f_{l}^{*}\Omega_{\lambda}^{T}=\Omega_{\lambda,l}^{T},
\end{equation*}
where $f_{l}^{*}:H_{\mb T}^{*}(\Gr_{d}(\hs))\to H_{\mb
T}^{*}(\Gr_{d}^{l})$ is the homomorphism induced by the inclusion map
$f_{l}:\Gr_{d}^{l}\to \Gr_{d}(\hs)$.

\begin{prop}
The equivariant Schubert class $\Omega_{\lambda,l}^{T}$ in
$\Gr_{d}(\hs)$ is given by
\begin{equation}\label{schubert}
\Omega_{\lambda,l}^{T}=\det\left[c_{\lambda_{i}+j-i}^{T}(\underline{\hs}_{-l,\lambda_{i}-i+d-1}-\mc
E_{l})\right]_{i,j=1}^{n}.
\end{equation}
Here $\underline{\hs}_{i,j}$ is the equivariant vector bundle
$\hs_{i,j}\times \Gr_{d}^{l}$ and the vector space $\hs_{i,j}$
is the subspace of $\hs$ spanned by $\{z^{k}:i\leq k\leq j\}$.
\end{prop}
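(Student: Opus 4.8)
The plan is to realize $\bar{\Sigma}_{\lambda,l}$ as a Schubert degeneracy locus for the rank $n=d+l$ tautological bundle $\mc E_l$ and then to invoke the equivariant Kempf--Laksov (Giambelli) formula for such loci, in the form given in \cite{Ful}. First I would make the ambient flagged bundle explicit. Setting $\hs^{(l)}=\Sp\{z^{k}:k\geq -l\}$, the fibre of $\mc E_l$ over $W$ is $\ker(\pi_l\colon W\to z^{-l}\hs_-)=W\cap\hs^{(l)}$; surjectivity of $\pi_l$ on $\Gr_d^l$ together with $\ind\pi_l=d+l$ shows this fibre has dimension $n$. The trivial subbundles $\underline{\hs}_{-l,m}$ $(m\geq -l)$ form a complete $\mb T$-equivariant flag of $\underline{\hs^{(l)}}$, with $\rank\underline{\hs}_{-l,m}=m+l+1$, so that $\mc E_l$ appears as a tautological rank-$n$ subbundle inside this flagged ambient bundle.

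Next I would translate the Schubert condition into intersection conditions on $\mc E_l$. Using $\lambda_i=s_i+i-d$, i.e.\ $s_i=\lambda_i-i+d$, an evaluation at a fixed point $\hs_S$ gives $\mc E_l$ the coordinate fibre $\Sp\{z^{s_1},\dots,z^{s_n}\}$, whence $\dim(\mc E_l\cap\underline{\hs}_{-l,\,s_i-1})=n-i$ there. Since $\underline{\hs}_{-l,s_i-1}=\underline{\hs}_{-l,\lambda_i-i+d-1}$ has rank $\lambda_i+n-i$, this identifies $\bar{\Sigma}_{\lambda,l}$ with the standard Schubert variety $\{W:\dim(\mc E_l\cap\underline{\hs}_{-l,\lambda_i-i+d-1})\geq n-i,\ i=1,\dots,n\}$, of codimension $|\lambda|$. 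I would confirm that these rank conditions cut out precisely $\bar{\Sigma}_{\lambda,l}$ by checking them against the Schubert stratification of $\Gr_d^l$ from \cite{PS}, which also fixes the hypothesis $l(\lambda)<d+l$ guaranteeing that exactly the first $n$ pivots lie in $\hs^{(l)}$.

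With this description in place, the equivariant Kempf--Laksov formula computes the class directly. Applied to the flag $\{\underline{\hs}_{-l,m}\}$ and the subbundle $\mc E_l$, it writes $\Omega_{\lambda,l}^{T}$ as the determinant whose $(i,j)$ entry is the equivariant Chern class, in degree $\lambda_i+j-i$, of the virtual difference between the reference subbundle attached to the $i$-th condition and the tautological bundle; that difference is exactly $\underline{\hs}_{-l,\lambda_i-i+d-1}-\mc E_l$ (of virtual rank $\lambda_i-i$), which yields \eqref{schubert}. The underlying mechanism is the Kempf--Laksov resolution: one forms over $\Gr_d^l$ the flag bundle adapted to the prescribed intersections, pulls back $\mc E_l$, and pushes the fundamental class forward, the iterated Gysin map through the tower producing the Jacobi--Trudi determinant. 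I would run this construction $\mb T$-equivariantly throughout, over the Borel construction, which is legitimate since the flag, $\mc E_l$, and the resolution are all $\mb T$-equivariant.

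The main obstacle is to keep the infinite-dimensional ambient space harmless while pinning down the indexing. Because the locus has finite codimension $|\lambda|$, only finitely many flag steps enter the determinant, so I would either truncate $\underline{\hs^{(l)}}$ to a finite-rank $\mb T$-invariant piece containing all relevant subspaces and check independence of the truncation, or pass to the finite-dimensional model of $\Gr_d^l$ of \cite{PS}, reducing \eqref{schubert} to the classical equivariant Giambelli formula on a genuine smooth variety. The subtler bookkeeping is to match the characteristic sequence $S$, the partition $\lambda$, and the flag index so that the reference bundle in row $i$ emerges as $\underline{\hs}_{-l,\lambda_i-i+d-1}$ rather than an off-by-one neighbour; the fixed-point computation above is what settles this. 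As an independent check I would restrict both sides to the $\mb T$-fixed points $\hs_S$ and compare via the localization theorem, since there the restrictions of $\mc E_l$ and of the $\underline{\hs}_{-l,m}$ are explicit sums of weight lines and the determinant evaluates to a ratio of products of weights.
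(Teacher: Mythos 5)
Your proposal is correct and takes essentially the same approach as the paper: the paper's entire proof is the one-line observation that the formula follows from the (equivariant) Kempf--Laksov determinantal formula, citing \cite{Ful} and \cite{KL}. You have simply supplied in full the bookkeeping the authors leave implicit --- the flag $\underline{\hs}_{-l,m}$ inside $\Sp\{z^{k}:k\geq -l\}$, the rank conditions $\dim(\mc E_{l}\cap\underline{\hs}_{-l,\lambda_{i}-i+d-1})\geq n-i$ identifying $\bar{\Sigma}_{\lambda,l}$ as a degeneracy locus, and the reduction of the infinite-dimensional ambient space to a finite-dimensional model --- and your index checks (ranks $\lambda_{i}+n-i$, virtual rank $\lambda_{i}-i$, fixed-point verification) are all consistent with the statement.
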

\begin{proof}
This statement follows from the Kempf-Laksov's formula, see
\cite{Ful} and \cite {KL}.
\end{proof}

\section{Structure Constants of Schubert Classes with respect to the Standard $S^{1}$-Action}
Let $\hs_{S}$ be a fixed point of $\mb T$--action on
$\Gr_{d}(\hs)$. The inclusion map $\iota_{S}:\{\hs_{S}\}\to
\Gr_{d}(\hs)$ induces a homomorphism:
\begin{equation*}
\iota_{S}^{*}:H_{S^{1}}^{*}(\Gr_{d}(\hs))\to
H_{S^{1}}^{*}(\{\hs_{S}\})
\end{equation*}
called the restriction map. Denote by $\delta=(\delta_{i})$ the
 partition corresponding to $S$, i.e. $\delta_{i}=s_{i}+i-d$. Assume
that $\lambda=(\lambda_{i})$ is a partition such that
$l(\lambda)<l(\delta)$. Then

\begin{lem}\label{fix}
\begin{equation*}
\iota_{S}^{*}\Omega_{\lambda,l}^{T}=u^{|\lambda|}s_{\lambda}^{*}(\delta_{1},\cdots,\delta_{d+l}).
\end{equation*}
Here $s_{\lambda}^{*}$ is the shifted Schur function defined in
\cite{OO}.
\end{lem}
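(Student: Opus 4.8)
The plan is to start from the Kempf--Laksov determinant of Proposition 2.1 and push it through the restriction homomorphism. Since $\iota_S^*$ is a homomorphism of $H_{S^1}^*(\text{pt})$-algebras it commutes with determinants and with products of Chern classes, so
\begin{equation*}
\iota_S^*\Omega_{\lambda,l}^{T}=\det\left[\iota_S^* c_{\lambda_i+j-i}^{T}\!\left(\underline{\hs}_{-l,\lambda_i-i+d-1}-\mc E_l\right)\right]_{i,j=1}^{n},\qquad n=d+l .
\end{equation*}
Everything therefore reduces to restricting the equivariant Chern classes of the two bundles to the fixed point $\hs_S$. The hypothesis $l(\lambda)<l(\delta)$, together with $\hs_S\in\Gr_d^l$ (which forces $l(\delta)\le n$), gives $l(\lambda)<n$, so $\bar{\Sigma}_\lambda$ meets $\Gr_d^l$ in general position and the Kempf--Laksov determinant of size $n$ applies; the stability property (Proposition \ref{stab}) makes the evaluation at $d+l$ arguments $\delta_1,\dots,\delta_{d+l}$ well defined.

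Next I would read off the weights. The bundle $\underline{\hs}_{-l,m}=\hs_{-l,m}\times\Gr_d^l$ is constant, and under the standard action the line $\mb C z^k$ has weight $k$, so its equivariant Chern roots are $\{ku:-l\le k\le m\}$. For $\mc E_l$ I first identify the fiber at $\hs_S$: the kernel of $\pi_l:\hs_S\to z^{-l}\hs_-$ is spanned by those $z^{s}$ with $s\in S$ and $s\ge -l$, and from $s_i=\delta_i+d-i$ one checks these are exactly $s_1>\dots>s_n$; hence the Chern roots of $\mc E_l|_{\hs_S}$ are $\{s_r u:1\le r\le n\}$. This yields the generating function
\begin{equation*}
\sum_{p\ge 0}\iota_S^* c_{p}^{T}\!\left(\underline{\hs}_{-l,m}-\mc E_l\right)t^{p}
=\frac{\prod_{k=-l}^{m}(1+kut)}{\prod_{r=1}^{n}(1+s_r ut)} .
\end{equation*}
Taking the coefficient of $t^{\lambda_i+j-i}$ with $m=\lambda_i-i+d-1$ expresses the $(i,j)$ entry as $u^{\lambda_i+j-i}$ times a polynomial in the integers $s_r$. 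Since every term in the Leibniz expansion of the determinant carries total $u$-degree $\sum_i(\lambda_i+\sigma(i)-i)=|\lambda|$, the factor $u^{|\lambda|}$ pulls out, and it remains to show that the surviving determinant in the variables $\delta_r$ equals $s_\lambda^*(\delta_1,\dots,\delta_n)$.

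The final identification is the heart of the matter. The key numerical input is $s_r+l=\delta_r+n-r$ (immediate from $s_r=\delta_r+d-r$ and $n=d+l$), which rewrites the denominator roots in the Okounkov--Olshanski normalization $x_r+n-r$ at $x_r=\delta_r$, while the truncated products $\prod_{k=-l}^{m}(1+kut)$ provide exactly the accompanying falling-factorial data. Concretely I would recognize the row generating functions as the double-Schur complete homogeneous functions $h_p$ for the sequence $y_k=\text{const}+k$, so that, after a transposition converting the row-dependent truncations into the column shifts $\tau^{j-1}y$, the determinant becomes the generalized Jacobi--Trudi expansion (\ref{GJT}) of $^{n}s_\lambda(\delta\,|\,y)$. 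By the remark following (\ref{sds1}) this double Schur function with $y_k=\text{const}+k$ is precisely the shifted Schur function $s_\lambda^*(\delta_1,\dots,\delta_n)$ of \cite{OO}, which finishes the proof.

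The step I expect to be the main obstacle is exactly this last matching: turning the determinant of equivariant Chern classes --- a determinant of elementary symmetric functions in the weights, with row-dependent truncations and no separate variable index --- into the Jacobi--Trudi (or bialternant) form of $s_\lambda^*$. I would handle it through the double-Schur-function machinery and the specialization $y_k=\text{const}+k$ rather than by a direct determinant manipulation, since the combinatorial bookkeeping of the shifts $\tau^{j-1}y$ against the truncations $\prod_{k=-l}^{\lambda_i-i+d-1}$ is where all the content sits.
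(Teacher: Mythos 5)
Your proposal is, in substance, the paper's own proof with the order of two steps reversed, and the ingredients are identical: the Kempf--Laksov determinant of Proposition 2.1, the weights of $\underline{\hs}_{-l,m}$ and of $\mc E_l$ at the fixed point (including the correct identification of the fiber of $\mc E_l$ at $\hs_S$ as the span of $z^{s_1},\dots,z^{s_n}$ and the relation $s_r=\delta_r+d-r$), the generalized Jacobi--Trudi formula (\ref{GJT}), and the specialization $y_k=\mathrm{const}+k$ turning the double Schur function into the Okounkov--Olshanski shifted Schur function via the remark after (\ref{sds1}). The paper does the identification \emph{before} restricting: it expands $c_{p+j-i}^{T}(\underline{\hs}_{-l,\lambda_i-i+d-1}-\mc E_l)$ in terms of $h_a$ of the Chern roots $x_i'$ of $\mc E_l^{\vee}$ and $e_b(y_{-n},\dots,y_{\lambda_i-i-1})$ with $y_j=(j+d)u$, concludes $\Omega_{\lambda,l}^{T}={}^{n}s_{\lambda}(x_1',\dots,x_n'\,|\,\tau^{n+1}y)$ from (\ref{GJT}), and only then applies $\iota_S^*$, i.e.\ substitutes $x_i'\mapsto(\delta_i-i+d)u$; you restrict first and match a numerical determinant. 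The step you flag as the ``main obstacle'' --- reconciling the row-dependent truncations with the column shifts $\tau^{j-1}y$ in (\ref{GJT}) --- is real, but the paper invokes it with exactly the same brevity, so you are not missing anything the paper supplies; note also that the paper's restriction-last order yields the stronger class-level identity that is reused in Section 4 (Theorem 4.1), which your restriction-first order does not give.

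One concrete discrepancy to fix: the sign convention. The paper normalizes $c^{T}(\underline{\hs}_{-l,m})=\prod_{j=-l}^{m}(1-ju)$ and works with the roots of the \emph{dual} bundle $\mc E_l^{\vee}$, i.e.\ the weight-$k$ line carries root $-ku$. With your convention (root $+ku$), each entry of your generating function $\prod_{k=-l}^{m}(1+kut)\big/\prod_{r=1}^{n}(1+s_r ut)$ equals $(-1)^{\lambda_i+j-i}$ times the paper's entry, so carried through literally your determinant comes out to $(-u)^{|\lambda|}s_{\lambda}^{*}(\delta_1,\dots,\delta_n)$ rather than $u^{|\lambda|}s_{\lambda}^{*}(\delta_1,\dots,\delta_n)$ (already visible in the $1\times 1$ case $n=1$, $\lambda=(p)$, where one gets $(-u)^p\,\delta_1(\delta_1-1)\cdots(\delta_1-p+1)$). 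This is a normalization issue rather than a mathematical error --- it is repaired by $u\mapsto -u$, equivalently by using $\mc E_l^{\vee}$ and the paper's Chern-class convention throughout --- but since you left the final Jacobi--Trudi identification schematic, you must set it up with the dual convention (or absorb the sign via the degree-$|\lambda|$ homogeneity of ${}^{n}s_{\lambda}$ in the joint variables $(x,y)$), or the stated lemma will be off by $(-1)^{|\lambda|}$.
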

\begin{proof}
Denote $n=d+l$. Assume that $x_{1}',\cdots,x_{n}'$ are equivariant
Chern roots of $\mc E_{l}^{\vee}$ (the dual equivariant vector
bundle of $\mc E_{l}$). Then $c^{T}(\mc
E_{l})=\prod_{i=1}^{n}(1-x_{i}')$. We also have
$c^{T}(\underline{\hs}_{-l,m})=\prod_{j=-l}^{m}(1-ju)$. This gives
us: for each $p\geq 0$
\begin{equation*}
c_{p+j-i}^{T}(\underline{\hs}_{-l,\lambda_{i}-i+d-1}-\mc
E_{l})=\sum_{a+b=p+j-i}h_{a}(x_{1}',\cdots,x_{n}')(-1)^{b}e_{b}(y_{-n},y_{-n-1},\cdots,y_{\lambda_{i}-i-1}),
\end{equation*}
where $y_{j}=(j+d)u$ for $j\in\mb Z$. By the generalized
Jacobi-Trudi formula (\ref{GJT}), we find
\begin{equation*}
\Omega_{\lambda,l}^{T}=\
^{n}s_{\lambda}(x_{1}',\cdots,x_{n}'|\tau^{n+1}y).
\end{equation*}
Applying the restriction homomorphism $\iota_{S}^{*}$, we find
\begin{align*}
\iota_{S}^{*}\Omega_{\lambda,l}^{T}&=\
^{n}s_{\lambda}(\delta_{1}-1+d,\cdots,\delta_{n}-n+d|\tau^{n+1}a)u^{|\lambda|}=\
^{n}s_{\lambda}^{*}(\delta_{1},\cdots,\delta_{n}|a)u^{|\lambda|},
\end{align*}
where $a=(a_{j})$ with $a_{j}=j+d$ for all $j\in\mb Z$. The shifted
double Schur function
$^{n}s_{\lambda}^{*}(\delta_{1},\cdots,\delta_{n}|a)$ coincides with
the shifted Schur function defined in \cite{OO}. This completes the
proof.
\end{proof}
Now we are ready to prove the theorem \ref{coeff0}.  Since $C_{\lambda\mu}^{\nu}(u)$
is of the form $C_{\lambda\mu}^{\nu}u^{|\lambda|+|\mu|-|\nu|}$,
(\ref{coef0}) can be rewritten as
\begin{equation*}
\Omega_{\lambda}^{T}\Omega_{\mu}^{T}=\sum_{\nu}C_{\lambda\mu}^{\nu}u^{|\lambda|+|\mu|-|\nu|}\Omega_{\nu}^{T}.
\end{equation*}
Applying $\iota_{S}^{*}$ to the above equation\footnote{Here  we may
assume that $l(\lambda),l(\mu),l(\nu)<l(\delta)$. If
$l(\lambda)>l(\delta)$, $\iota_{S}^{*}\Omega_{\lambda}^{T}=0$} and
using lemma \ref{fix}, we find that the constants
$(C_{\lambda\mu}^{\nu})$ satisfy (\ref{coef1}).

In order to compute the structure constants for the equivariant
Schubert classes of $\Gr_{d}(\hs)$, we have to introduce the notion
of shifted symmetric functions.

\section{Algebra of Shifted Symmetric Functions}
Let $x=(x_{n})_{n\in\mb N}$ be a sequence of variables obeying
$x_n=0$ for $n>>0$ and $y=(y_{i})_{i\in\mb Z}$ be a doubly infinite
sequence of variables. Denote the set of pairs $(x,y)$ by $R$. Let
us consider a function $f(x|y)$ such that its restriction $f_n$ to
the subset $R_k$  specified by the condition $x_{k+1}=x_{k+2}=\dots
=0$ is a polynomial for every $k\in \mb N.$ We say that $f$ is
shifted symmetric if $f_n$ symmetric with respect to the variables
$x_{i}'=x_{i}+y_{-i}$ for $1\leq i\leq n$. In other words,
\begin{equation*}
f'_n(x_{1}',\dots,x_{n}'|y)=f_n(x_{1}'-y_{-1},\dots,x_{n}'-y_{-n}|y)
\end{equation*}
is symmetric with respect to $x'=(x_{1}',\dots,x_{n}')$.

This definition is motivated by the definition in \cite{OO}. (If we
replace $y_{j}$ by $\mbox{constant}+j$, we obtain the definition of
the shifted symmetric functions given in \cite{OO}.) An essentially
equivalent notion was introduced in \cite {Mol2}.  Instead of  $R$
one can consider a set $\tilde R$ of pairs $(x,y)$ where
$x=(x_{n})_{n\in \mb N}$ and $y=(y_{i})_{i\in\mb Z}$ are sequences
obeying $x_n=y_{-n}$ for $n>>0.$ Shifted symmetric functions on $R$
correspond to symmetric functions on $\tilde R$; this correspondence
can be used to relate our approach to the approach of \cite {Mol2}.
It is obvious that shifted symmetric functions on $R$ constitute a
ring; we denote this ring  by $\Lambda^{*}(x\|y).$  It is clear
that this ring is isomorphic to the ring $\Lambda (x\|y)$ of
symmetric functions on $\tilde R$ considered in \cite {Mol2}.  It
follows immediately from well known results that the shifted double
Schur functions $\{s_{\lambda}^{*}(x|y)\}$ form a linear basis for
$\Lambda^{*}(x\|y)$ considered as $\mb C[y]$-module..

Now we are ready to finish the proof of theorem \ref{coeff}. Let
$\{x_{1}',\cdots, x_{n}'\}$ be the equivariant Chern roots of of the
equivariant vector bundle $\mc E_{l}^{\vee}$. Denote
$\mathbf{u}=(u_{i})_{i\in\mb Z}$ the sequence of weights of the
action of $\mb T$ on $\hs$ and $y=\tau^{-d}\mathbf{u}$. Define
$x_{i}=x_{i}'-y_{-i}$ for $1\leq i\leq n$.


\begin{thm} The equivariant Schubert class
$\Omega_{\lambda,l}^{T}$ is given by
\begin{equation*}
\Omega_{\lambda,l}^{T}=\
^{d+l}s_{\lambda}^{*}(x_{1},\cdots,x_{d+l}|y).
\end{equation*}
\end{thm}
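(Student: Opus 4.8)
The plan is to follow the computation in the proof of Lemma \ref{fix} almost verbatim, keeping the torus weights general throughout. Write $n=d+l$, let $x_1',\cdots,x_n'$ be the equivariant Chern roots of $\mc E_l^\vee$, let $u_k$ denote the equivariant weight of the line $\mb C z^k\subset\hs$, and set $y=\tau^{-d}\mathbf u$, so that $y_k=u_{k+d}$. The starting point is the Kempf--Laksov determinant (\ref{schubert}) for $\Omega_{\lambda,l}^T$, and the goal is to rewrite its entries so that the determinant becomes the generalized Jacobi--Trudi determinant (\ref{GJT}) of $^n s_\lambda(x'|\tau^{n+1}y)$.

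First I would compute the two equivariant total Chern classes appearing in (\ref{schubert}). From the Chern roots one has $c^T(\mc E_l)=\prod_{i=1}^n(1-x_i')$, and since $\underline{\hs}_{-l,m}=\hs_{-l,m}\times\Gr_d^l$ splits equivariantly into the lines $\mb C z^k$ for $-l\le k\le m$, one has $c^T(\underline{\hs}_{-l,m})=\prod_{k=-l}^m(1-u_k)$. Forming the quotient for the virtual difference bundle and extracting the part of degree $\lambda_i+j-i$ gives the $(i,j)$ entry
\begin{equation*}
c_{\lambda_i+j-i}^T(\underline{\hs}_{-l,\lambda_i-i+d-1}-\mc E_l)=\sum_{a+b=\lambda_i+j-i}h_a(x')(-1)^b e_b(y_{-n},\cdots,y_{\lambda_i-i-1}),
\end{equation*}
where the $e_b$-arguments are read off from $u_k=y_{k-d}$ and the range $-l\le k\le\lambda_i-i+d-1$.

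Next I would assemble these entries into the determinant (\ref{schubert}) and identify it, by the generalized Jacobi--Trudi formula (\ref{GJT}), with $^n s_\lambda(x_1',\cdots,x_n'|\tau^{n+1}y)$, exactly as in Lemma \ref{fix}. Finally, since $x_i=x_i'-y_{-i}$, i.e.\ $x_i'=x_i+y_{-i}$, the definition (\ref{sds1}) of the shifted double Schur function gives
\begin{equation*}
{}^n s_\lambda^*(x_1,\cdots,x_n|y)={}^n s_\lambda(x_1+y_{-1},\cdots,x_n+y_{-n}|\tau^{n+1}y)={}^n s_\lambda(x_1',\cdots,x_n'|\tau^{n+1}y)=\Omega_{\lambda,l}^T,
\end{equation*}
which is the asserted formula.

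The one step requiring care is the passage from the Kempf--Laksov determinant to the Jacobi--Trudi determinant. In (\ref{schubert}) the bundle $\underline{\hs}_{-l,\lambda_i-i+d-1}$ depends on the row $i$, so the window $y_{-n},\cdots,y_{\lambda_i-i-1}$ of the elementary symmetric functions is fixed along each row, whereas in (\ref{GJT}) the argument $\tau^{j-1}(\tau^{n+1}y)$ is shifted by the column $j$. The two determinants nevertheless agree: the discrepancy is absorbed by column operations, using the recurrences of the double complete homogeneous functions $h_p(x'|\tau^s y)$ under a change of shift $s$. Since this identity is purely formal in the variables $u_k$, it is the very same manipulation already carried out for the standard action in Lemma \ref{fix}; replacing the numerical weights $ku$ by the general weights $u_k$ changes nothing in the algebra. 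This is the step I expect to be the main obstacle, and the remainder of the argument is bookkeeping.
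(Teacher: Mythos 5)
Your proposal is correct and takes essentially the same route as the paper: the paper's entire proof is the remark that the argument of Lemma \ref{fix} carries over verbatim, and your computation is exactly that argument with the general weights $u_{k}$ replacing $ku$, followed by the change of variables $x_{i}'=x_{i}+y_{-i}$ that the paper sets up just before the theorem. The column-shift discrepancy between the Kempf--Laksov and Jacobi--Trudi determinants that you flag is likewise left implicit in the paper's Lemma \ref{fix}, so your treatment is, if anything, slightly more explicit than the source.
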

\begin{proof}
The proof is the same as that in lemma \ref{fix}.
\end{proof}
This proposition allows us to define the algebra isomorphism between
the equivariant cohomology ring $H_{\mb T}^{*}(\Gr_{d})$ and the
algebra of shifted symmetric functions $\Lambda^{*}(x\|y)$. The
isomorphism between them is given by
\begin{equation*}
\Omega_{\lambda}^{T}\mapsto s_{\lambda}^{*}(x|y).
\end{equation*}
This implies that the multiplication table in
$H_{T}^{*}(\Gr_{d}(\hs))$ with respect to the basis
$\{\Omega_{\lambda}^{T}\}$ is the same as the multiplication table
of $\{s_{\lambda}^{*}(x|y)\}$ in $\Lambda^{*}(x\|y)$. This completes
the proof of the theorem \ref{coeff}.

\section{Comultiplication}
Our main goal in this section is to prove theorem \ref{com}. Since
the algebra of shifted symmetric functions $\Lambda^{*}(x\|y)$ is
 isomorphic to the algebra of symmetric functions $\Lambda(x\|y)$
 with an isomorphism given by the change of variables, we will use $\Lambda(x\|y)$
for convenience. Without loss of generality, we may assume that all
 indices $d$ are zero, i.e. we consider the product map
\begin{equation*}
\rho:\Gr_{0}(\hs_{even})\times \Gr_{0}(\hs_{odd})\to \Gr_{0}(\hs).
\end{equation*}
The infinite torus acts on $\hs$ and thus acts on $\hs_{even}$ and
$\hs_{odd}$ in a natural way; the map $\rho$ is equivariant. Since
the Grassmannian is equivariantly formal (see \cite {GKM},\cite
{Ful}) we obtain that
\begin{equation*}
H_{\mb T}^{*}(\Gr_{0}(\hs))=H_{\mb T}^{*}(pt)\otimes
H^{*}(\Gr_{0}(\hs))
\end{equation*}
as a $H_{\mb T}^{*}(pt)$-module. Using the K\"{u}nneth theorem and relation
$\mb C[\mathbf{u}]=\mb C[\mathbf{u}_{even}]\otimes \mb
C[\mathbf{u}_{odd}]$, we find that
\begin{equation*}
H_{\mb T}^{*}(\Gr(\hs_{even})\times \Gr(\hs_{odd}))=H_{\mb
T}^{*}(\Gr(\hs_{even}))\otimes H_{\mb T}^{*}(\Gr(\hs_{odd}))
\end{equation*}
as $H_{\mb T}^{*}(pt)$-modules, where
$\mathbf{u}_{even}=(u_{2k})_{k\in\mb Z}$ and
$\mathbf{u}_{odd}=(u_{2k+1})_{k\in\mb Z}$.

For each $l\geq 1$, let $\Gr_{0}^{l}(\hs_{even})$ and
$\Gr_{0}^{l}(\hs_{odd})$ be respectively the submanifolds of
$\Gr_{0}(\hs_{even})$ and $\Gr_{0}(\hs_{odd})$ consisting of points
$W$ so that the orthogonal projection $\pi_{even,l}:W\to
z^{-2[l/2]}\hs_{even,-}$ is surjective if $W\in\Gr_{0}(\hs_{even})$
and $\pi_{odd,l}:W\to z^{-2[l/2]-2}\hs_{odd,-}$ is surjective if
$W\in\Gr_{0}(\hs_{odd})$. We can restrict the equivariant map
(\ref{product}) to the equivariant map
\begin{equation*}
\rho:\Gr_{0}^{l}(\hs_{even})\times \Gr_{0}^{l}(\hs_{odd})\to
\Gr_{0}^{l}.
\end{equation*}
Then the pull back bundle $\rho^{*}\mc E_{l}$ over
$\Gr_{0}^{l}(\hs_{even})\times\Gr_{0}^{l}(\hs_{odd})$ splits into
the direct sum of $\mb T$-equivariant vector bundles $\mc
E_{l,even}$ and $\mc E_{l,odd}$, where the vector bundle $\mc
E_{l,even}$ and $\mc E_{l,odd}$ are respectively the $\mb
T$-equivariant bundle over $\Gr_{0}^{l}(\hs_{even})$ and over
$\Gr_{0}^{l}(\hs_{odd})$) whose fiber over $W$ is the kernel of the
orthogonal projection $\pi_{even,l}:W\to z^{-2[l/2]}\hs_{even,-}$ if
$W\in\Gr_{0}^{l}(\hs_{even})$ and is the kernel of $\pi_{odd,l}:W\to
z^{-2[l/2]-2}\hs_{odd,-}$ if $W\in\Gr_{0}^{l}(\hs_{odd})$.
Similarly, the $\mb T$-equivariant vector bundle
$\rho^{*}\underline{\hs}_{-l,-1}$ over
$\Gr_{0}^{l}(\hs_{even})\times\Gr_{0}^{l}(\hs_{odd})$ splits into a
direct sum of $\mb T$-equivariant bundles $\mc F_{l,even}$ and $\mc
F_{l,odd}$, where $\mc F_{l,even}$ and $\mc F_{l,odd}$ are the
product bundles of the form $F_{even}\times
(\Gr_{0}^{l}(\hs_{even})\times\Gr_{0}^{l}(\hs_{odd}))$ and
$F_{odd}\times
(\Gr_{0}^{l}(\hs_{even})\times\Gr_{0}^{l}(\hs_{odd}))$; $F_{l,even}$
and $F_{l,odd}$ are the linear spaces spanned by $\{z^{2s}:-l\leq
2s\leq -1\}$ and by $\{z^{2s+1}:-l\leq 2s+1\leq -1\}$ respectively.
In other words, we have
\begin{equation*}
\rho^{*}\mc E_{l}=\mc E_{l,even}\oplus \mc
E_{l,odd}\quad\mbox{and}\quad \rho^{*}\underline{\hs}_{-l,-1}=\mc
F_{l,even}\oplus \mc F_{l,odd}
\end{equation*}
as $\mb T$-equivariant vector bundles over
$\Gr_{0}^{l}(\hs_{even})\times\Gr_{0}^{l}(\hs_{odd})$.

Assume that $x_{1},\cdots,x_{l}$ are the equivariant Chern roots of
$\mc E_{l}$ and define the power sum functions
$p_{k}(x_{1},\cdots,x_{l}|\mathbf{u})$, $k\geq 1$ by
\begin{equation}\label{pk}
p_{k}(x_{1},\cdots,x_{l}|\mathbf{u})=\sum_{i=1}^{l}\left[x_{i}^{k}-u_{-i}^{k}\right].
\end{equation}
Then the power sum function $p_{k}$ equals to the $k$-dimensional component of the equivariant Chern
character of the difference bundle $\mc
E_{l}-\underline{\hs}_{-l,-1}$, i.e.
$p_{k}(x_{1},\cdots,x_{l}|\mathbf{u})=\ch_k^{T}(\mc
E_{l}-\underline{\hs}_{-l,-1})$. Since $\ch^{T}(\mc
E_{l}-\underline{\hs}_{-l,-1})=\ch^{T}(\mc E_{l,even}-\mc
F_{l,even})+\ch^{T}(\mc E_{l,odd}-\mc F_{l,even})$, we find that
\begin{equation}\label{comp2}
\rho^{*}p_{k}(x_{1},\cdots,x_{l}|\mathbf{u})=p_{k}(x_{2},x_{4}\cdots,x_{2[l/2]}|\mathbf{u}_{even})+p_{k}(x_{1},x_{3},\cdots,x_{2[(l-1)/2]+1}|\mathbf{u}_{odd}).
\end{equation}
Notice that the function (\ref {pk}) can be extended to an element
of the ring $\Lambda (x\|\mathbf{u})$ ( to a symmetric function on
$\tilde R$). More precisely, the formula
\begin{equation}\label{pkk}
p_{k}(x|\mathbf{u})=\sum_{i=1}^{\infty}\left[x_{i}^{k}-u_{-i}^{k}\right]
\end{equation}
is a finite sum on $\tilde R$ and therefore specifies an element of
$\Lambda (x\|\mathbf{u})$. This formula is also equivalent to
(\ref{spower}) under a change of variables. By (\ref{comp2}) and
(\ref{pkk}), we obtain the following formula:
\begin{equation}\label{copro}
\rho^{*}p_{k}(x|\mathbf{u})=p_{k}(x_{even}|\mathbf{u}_{even})+p_{k}(x_{odd}|\mathbf{u}_{odd})
\end{equation}
where $(x_{even})_k=x_{2k}$ and $(x_{odd})_k=x_{2k+1}$ for $k\in\mb
Z$.

Let $\psi_{even}$ and $\psi_{odd}$ be the algebra isomorphisms
$\Lambda(x_{even}\|\mathbf{u}_{even})\to \Lambda(x\|\mathbf{u})$ and
$\Lambda(x_{odd}\|\mathbf{u}_{odd})\to \Lambda(x\|\mathbf{u})$
defined by $\psi_{even}(x_{2k})=x_{k}$, $\psi_{even}(u_{2k})=u_{k}$
and by $\psi_{odd}(x_{2k+1})=x_{k}$, $\psi_{even}(u_{2k+1})=u_{k}$
for $k\in\mb Z$ respectively. Then $\psi_{even}\otimes\psi_{odd}$
determines an isomorphism from $H_{\mb
T}^{*}(\Gr_{0}(\hs_{even}))\otimes H_{\mb
T}^{*}(\Gr_{0}(\hs_{odd}))$ to $H_{\mb T}^{*}(\Gr_{0}(\hs))\otimes
H_{\mb T}^{*}(\Gr_{0}(\hs))$.  Combining the maps $\rho^{*}$and
$\psi_{even}\otimes\psi_{odd}$ we obtain a homomorphism
\begin{equation*}
\Delta:H_{\mb T}^{*}(\Gr_{0}(\hs))\to H_{\mb
T}^{*}(\Gr_{0}(\hs))\otimes H_{\mb T}^{*}(\Gr_{0}(\hs))
\end{equation*}
so that (\ref{comp}) holds by (\ref{copro}). This completes the
proof of the theorem \ref{com}.\\

{\bf Acknowledments} Both authors thank Max-Planck Institute
f\"{u}r Mathematik in Bonn for generous support and wonderful
environment. The second author was partially supported by the NSF
grant DMS-0805989.

\bibliographystyle{amsplain}

\end{document}